\newtheorem{theorem}{Theorem}[section] 
\newtheorem{lemma}{Lemma}[section] 
\newtheorem{proposition}{Proposition}[section] 
\newtheorem*{corollary*}{Corollary} 
\newtheorem{corollary}{Corollary}[section] 
\newtheorem{remark}{Remark}[section] 
\numberwithin{equation}{section}
\newcommand{\card}{\ensuremath{\#}}
\theoremstyle{plain}
\newtheorem{maintheorem}{Theorem}
\title[]{Maps in Dimension One with Infinite Entropy}
\subjclass[2010]{Primary: 37B40; Secondary:  37E05, 46E35, 26A16.}
\keywords{Entropy, H\"older classes, Sobolev classes, Zygmund classes.}
\thanks{
This work has been partially supported by 
``Projeto Tem\'atico Din\^amica em Baixas Dimens\~oes'' FAPESP Grant 2011/16265-2, FAPESP Grant 2015/17909-7
}
\author{Peter Hazard} 
\address{Peter Hazard, Instituto de Matem\'{a}tica e Estat{\'i}stica, USP, S\~{a}o Paulo, SP, Brazil}
\email[]{pete@ime.usp.br}
\date{\today}
\begin{document}



\begin{abstract}
We give examples of endomorphisms in dimension one with infinite topological entropy which are $\alpha$-H\"older and $(1,p)$-Sobolev for all $0\leq \alpha< 1$ and 
$1\leq p<\infty$. 
This is constructed within a family of endomorphisms with infinite topological entropy and which traverse all $\alpha$-H\"older and $(1,p)$-Sobolev classes.
Finally, we also give examples of endomorphisms, also in dimension one, which lie in the big and little Zygmund classes, answering a question of M.\ Benedicks.
\end{abstract}

\maketitle

\section{Introduction}
\subsection{Background.} 
Adler, Konheim and McAndrew~\cite{AKM} defined the 
{\it topological entropy} of a continuous self-mapping of a compact metric space as an analogue of the 
Kolmogorov-Sinai {\it metric entropy} of a measure-preserving transformation of a measure space.
Recall that the topological entropy of a continuous self-mapping is a non-negative real number, possibly infinite, which is invariant under topological conjugacy. 
In~\cite{AKM}, an example of a map for which the topological entropy is infinite was given (a full shift on infinitely many symbols).
(See also~\cite{WaltersBook,DGS}.)
In~\cite{Yano1980}, it was shown that even on smooth manifolds 
there exist examples of continuous self-mappings with infinite topological entropy.
In fact, a stronger statement was shown: for smooth compact manifolds of dimension two or greater, a generic homeomorphism 
(with respect to the uniform topology) has infinite topological entropy.
 
In contrast, 
self-mappings with sufficient regularity or smoothness must have finite 
topological entropy.
More precisely~\cite[Theorem 3.2.9]{KandH}, 
if $f$ is a Lipschitz self-mapping of the compact metric space 
$(X,d)$ with finite Hausdorff dimension $D(X)$,
we have the following inequality
\begin{equation}
h_\mathrm{top}(f)\leq D(X) \log^+ \mathrm{Lip}_d(f)
\end{equation}
where 
$h_\mathrm{top}(f)$ denotes the topological entropy of $f$, and
$\mathrm{Lip}_d(f)$ denotes the Lipschitz constant of $f$ with respect to the metric $d$.
(See also~\cite{Ito1970,Bowen1971}.)

In~\cite{dFHazT2}, an investigation was started into what occurs between $C^0$ and Lipschitz regularity, 
in the case of homeomorphisms on smooth compact manifolds.
The notion of `between' can be taken in several different directions.
For compact subsets of the real line, for example, 
given $0\leq \alpha<\beta<1$, 
if 
$C^\alpha$ denotes the space of $\alpha$-H\"older self-maps and 
$C^Z$ and $C^{\mathrm{Lip}}$ denote the spaces of self-maps satisfying respectively the Zygmund and Lipschitz conditions 
\footnote{See~\cite{ZygmundBook} or Sections~\ref{subsect:not+term} and~\ref{sect:Zygmund} for definitions and basic properties.}
then
\begin{equation}
C^0
\supset C^\alpha
\supset C^\beta
\supset C^{Z}
\supset C^{\mathrm{Lip}} \ .
\end{equation}
Similarly, if 
$\mathbb{DAE}$ denotes the space of continuous self-maps differentiable (Lebesgue)-almost everywhere,
$\mathbb{AC}$ denotes the space of continuous self-maps which are absolutely continuous,
$\mathbb{BV}$ denotes the space of continuous self-maps with bounded variation, and 
$\mathbb{W}^{1,p}$, $1\leq p\leq \infty$, denotes the space continuous self-maps satisfying the $W^{1,p}$-Sobolev condition, then
\begin{equation}
C^0=UC
\supset \mathbb{DAE}
\supset \mathbb{BV}
\supset \mathbb{AC} \simeq \mathbb{W}^{1,1}
\supset \mathbb{W}^{1,p}
\supset \mathbb{W}^{1,\infty}\simeq C^{\mathrm{Lip}} \ .
\end{equation}
With some care, (most of) these regularity classes, and the associated inclusions, can be extended to higher dimensions, and 
even to general smooth manifolds.
In~\cite{dFHazT2} an investigation into which values of entropy are possible in these two families of inclusions was initiated.
It was shown that for any $\alpha\in[0,1)$ and $p\in[1,\infty)$,
infinite entropy was not only possible, but a generic property in certain spaces (suitably topologised) of bi-$\alpha$-H\"older homeomorphisms, 
and of bi-$(1,p)$-Sobolev homeomorphisms, on smooth manifolds of dimension two or greater.
\begin{comment}
More precisely, for any compact manifold $M$ of dimension two or more, 
if 
$\mathcal{H}^1_\alpha(M)$ denotes the closure of the space of bi-Lipschitz homeomorphisms in the $\alpha$-H\"older-Whitney topology;
$\mathcal{S}^{1,p}(M)$ denotes the space of bi-$W^{1,p}$-Sobolev homeomorphisms endowed with the $(1,p)$-Sobolev-Whitney topology;
then in both of these space, for any $\alpha\in [0,1)$ and $p\in [1,\infty)$, infinite topological entropy is a generic property.
We note that the endomorphism case is still open.
\end{comment}
%
\begin{comment}
Following~\cite{dFHazT2}, C.\ Tresser asked the author the following question:
%
\begin{quote}
{\it Tresser's Question:}
Is there some regularity class of mappings between $C^0$ and Lipschitz 
in which infinite topological entropy is not a generic property, but 
infinite entropy mappings exist?
\end{quote}
%
We note that this is currently unknown even in the H\"older case. 
(This was the original case of interest.)
\end{comment}
%
Note that the result in~\cite{dFHazT2} is only for the closure of bi-Lipschitz maps in the appropriate topology.
M.\ Benedicks asked the following question:
\begin{quote}
{\it Benedicks' Question:}
Is there a mapping in the big Zygmund class with infinite topological entropy?
In the little Zygmund class?
\end{quote}
Here we give an answer to these questions in the case of endomorphisms on compact one-manifolds.
Specifically, we will work on the closed unit interval, but the generalisation to the circle case will follow immediately.
(Note that homeomorphisms on compact one-manifolds must have zero entropy.)
\begin{comment}
It will become clear that these two questions are actually related: 
the answer to the second question will also give us an answer to the first.
\end{comment}
\subsection{Summary of results.}\label{subsect:summary_of_results}
First we construct examples of endomorphisms with infinite topological entropy lying in a H\"older or Sobolev class. 
\begin{maintheorem}\label{mainthm:HolderSobolev}
There exists a continuous one-parameter family of endomorphisms $f_a\in C([0,1],[0,1])$, $a\in (0,1]$, 
with the following properties
\begin{enumerate}
\item
for all $a\in (0,1]$ 
\begin{enumerate}
\item
all $f_a$ are topologically conjugate
\item
$h_\mathrm{top}(f_a)=+\infty$
\item
$f_a$ is not expansive, $h$-expansive or asymptotically $h$-expansive
\end{enumerate}
\item
for $a=1$
\begin{enumerate}
\item
$f_a$ has modulus of continuity $\omega(t)=t\log (1/t)$
\item
$f_a$ is in the Sobolev class $W^{1,p}$ for $1\leq p<\infty$
\item
Lebesgue measure is a measure of maximal entropy for $f_a$ (though there are at least countably many such measures)
\end{enumerate}
\item
for $a\in (0,1)$
\begin{enumerate}
\item
$f_a$ is $C^\alpha$ if and only if $\alpha\leq a$.
\item
$f_a$ is $W^{1,p}$ if and only if $p<(1-a)^{-1}$
\item
Lebesgue measure is not preserved by $f_a$, but there exist measures of maximal entropy for $f_a$ which are absolutely continuous with respect to Lebesgue.
\end{enumerate}
\end{enumerate}
\end{maintheorem}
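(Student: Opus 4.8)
The plan is to realise each $f_a$ as an \emph{infinitely folded} self-map of $[0,1]$ whose folds accumulate at the fixed point $0$, splitting the construction into a purely combinatorial part (fixed, $a$-independent) and a metric part (carrying the parameter $a$). Concretely, I would fix a decreasing sequence $x_n=x_n(a)\to 0$ with $x_0=1$, set $I_n=[x_{n+1},x_n]$, and on each $I_n$ place a tent-shaped fold of height $h_n$ mapping $I_n$ onto $[0,h_n]$ by two full monotone branches. Taking the heights at break points, $h_n=x_{\psi(n)}$ for a \emph{fixed} $\psi$ (say $\psi(n)=\lfloor n/2\rfloor$), makes the covering relation ``$I_n$ covers $I_m$ iff $m\ge\psi(n)$'' independent of $a$, so that every $f_a$ is semiconjugate to one and the same countable Markov shift. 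The parameter $a$ then enters only through the spacing, which I would take to be $x_n=\exp(-n^{q(a)})$ with $q(a)=\log_2(1/a)$ chosen so that near $0$ the graph of $f_a$ follows the cusp envelope $x\mapsto x^{a}$ (and $x\mapsto x\log(1/x)$ in the borderline case $a=1$).

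For the regularity I would compute, on $I_n$, the slope $s_n\approx h_n/\ell_n$ and length $\ell_n\approx x_n$, and check that the cusp spacing gives $s_n\approx x_n^{a-1}$. The $\alpha$-H\"older constant is governed by the largest oscillation of a single fold, $h_n/\ell_n^{\alpha}\approx x_n^{a-\alpha}$, which stays bounded as $n\to\infty$ exactly when $\alpha\le a$; comparing across folds and against the fixed point $0$ gives the same threshold, so $f_a\in C^\alpha$ iff $\alpha\le a$. The Sobolev seminorm is $\int_0^1|f_a'|^p=\sum_n s_n^{p}\ell_n\approx\sum_n x_n^{p(a-1)+1}$, which converges exactly when $p<(1-a)^{-1}$; note this saturates the one-dimensional embedding $W^{1,p}\hookrightarrow C^{1-1/p}$, i.e.\ $a=1-1/p_c$, which is the built-in consistency check. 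In the critical case $a=1$ the same scheme with $s_n\approx\log(1/x_n)$ yields $|f_1'|\sim\log(1/x)$ near $0$, hence $f_1\in W^{1,p}$ for every $p<\infty$ but not Lipschitz, with modulus of continuity $\omega(t)=t\log(1/t)$.

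For the dynamics I would proceed as follows. For each $M$ the interval $J=[0,x_M]$ is fully covered by both branches of every fold $I_j$ with $M\le j\le\psi^{-1}(M)$, and there are of order $M$ such folds, all lying inside $J$; this yields a horseshoe on at least $cM$ symbols inside $J$, so $h_{\mathrm{top}}(f_a)\ge\log(cM)\to\infty$. Since these horseshoes accumulate at $0$, the local entropy at $0$ is infinite, which rules out $h$-expansiveness and asymptotic $h$-expansiveness, while expansiveness fails because distinct points trapped in the deep folds near $0$ stay close under all iterates. Topological conjugacy of the $f_a$ follows from their identical combinatorics: each is conjugate to the same countable Markov shift (equivalently carries the same ordered kneading data), the conjugacy being the homeomorphism matching the two systems of break points. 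For the measures the key observation is that the generating partition into folds is \emph{infinite}, so the Margulis--Ruelle bound $h_\mu\le\int\log^+|f_a'|\,d\mu$ need not apply: the static entropy of the fold partition is infinite, and this lets an absolutely continuous invariant measure carry infinite metric entropy. At $a=1$ I would give the folds equal slopes, making the map a Lebesgue-preserving Markov map, so that Lebesgue is an a.c.\ invariant measure of infinite, hence maximal, entropy; combining measures on the various finite horseshoes then produces countably many further maximal measures. For $a<1$ the slopes vary and Lebesgue is not invariant, but the a.c.\ invariant measure of the expanding Markov system plays the same role.

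The hard part is making all of these requirements compatible at once. The central tension is between fixing the combinatorics (needed for topological conjugacy across the whole family) and hitting the \emph{exact} thresholds $a$ and $(1-a)^{-1}$: the covering relation naturally depends on the ratio of fold height to scale, which is precisely what controls the regularity. Resolving this forces the non-geometric spacing $x_n=\exp(-n^{q(a)})$, and one must verify that it keeps the combinatorics rigid while delivering the cusp scaling, the growing horseshoes, \emph{and} the endpoint behaviour $\alpha=a$ (rather than only $\alpha<a$). I expect the most delicate point to be $a=1$: one must simultaneously produce the precise modulus $t\log(1/t)$, remain in every $W^{1,p}$, and secure Lebesgue as an infinite-entropy maximal measure through the infinite-partition mechanism. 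Verifying that the a.c.\ measures are genuinely maximal, and that the failure of $h$-expansiveness is as stated, will require careful bookkeeping of how the entropy concentrates at $0$.
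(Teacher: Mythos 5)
Your overall architecture differs from the paper's in a way that creates most of your difficulties. The paper takes $I_n=(2^{-n},2^{-n+1}]$ and makes each $I_n$ \emph{totally invariant}, placing on it an affine rescaling of a $(2n+1)$-branched piecewise-affine horseshoe $g_{a,2n+1}=\varphi_a\circ g_{1,2n+1}\circ\varphi_a^{-1}$ with $\varphi_a(x)=x^a$. Entropy is then $\sup_n\log(2n+1)=+\infty$, failure of (asymptotic) $h$-expansivity comes from the arbitrarily small invariant blocks, and, crucially, topological conjugacy of the whole family is \emph{free}, since $f_a=\psi_a^{-1}\circ f_1\circ\psi_a$ by construction. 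Your interacting tower of folds, in which $I_n$ maps onto $[0,h_n]$ and entropy comes from a countable Markov structure, leaves the conjugacy as a genuine unproved obligation: semiconjugacy of every $f_a$ to the same countable Markov shift does not conjugate the interval maps; the break-point geometry $x_n=\exp(-n^{q(a)})$ is $a$-dependent; and, as you half-admit, the case $a=1$ requires a different slope scaling $s_n\approx\log(1/x_n)$ that is incompatible with keeping $h_n=x_{\psi(n)}$ for the same fixed $\psi$, so the ``identical combinatorics'' premise fails precisely where it is needed. Your regularity thresholds ($\alpha\le a$ and $p<(1-a)^{-1}$) and the horseshoe count are correct in spirit and parallel Corollary~\ref{cor:q_a-Holder+Sobolev_bounds}; note only that at the endpoint $\alpha=a$ the per-fold seminorms are bounded but not summable, so the gluing must exploit the values of $f$ at the break points, as in Proposition~\ref{prop:gluing}(ii).

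The measure-theoretic part of your proposal contains concrete errors. First, ``give the folds equal slopes'' at $a=1$ is self-defeating: a piecewise-affine map with uniformly bounded slopes is Lipschitz, hence $h_\mathrm{top}\leq \log^+\mathrm{Lip}<\infty$, contradicting 1(b). The slopes must be unbounded, and then Lebesgue-invariance becomes a nontrivial constraint (the reciprocal slopes of the branches covering a.e.\ point must sum to $1$) which you have not arranged; in the paper it holds trivially because each block is a full $(2n+1)$-branched map of constant slope $2n+1$ from an interval onto itself. Second, your mechanism for infinite metric entropy rests on a false premise: with $|I_n|\approx\exp(-n^{q})$ the static entropy of the fold/branch partition is comparable to $\sum_n n^{q}e^{-n^{q}}<\infty$, not infinite, and since this partition is generating for the expanding branch structure one gets $h_{\mathrm{Leb}}(f_a)$ bounded by that static entropy, hence finite; so an absolutely continuous measure cannot be a measure of maximal entropy in your construction, and the claimed evasion of the Margulis--Ruelle bound does not occur. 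Finally, your non-expansiveness argument (``points trapped in the deep folds stay close under all iterates'') does not apply to your own map, since orbits in $I_n$ are sent to scale $h_n$, which is much larger than $|I_n|$, and from there escape to macroscopic scale; the argument should instead be run on the invariant Cantor sets of the horseshoes accumulating at $0$ (or, as in the paper, on small totally invariant intervals).
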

We note that, for the specific examples considered here, 
half of the work is already done by
Morrey's inequality: 
namely, if $f_a$ lies $W^{1,p}$ then it automatically follows that $f_a$ is $C^\alpha$, where $\alpha=1-\frac{1}{p}$.
{\it However}, 
we also give an explicit proof of Theorem~\ref{mainthm:HolderSobolev}(3)(a). 
The reason being that our construction is made using piecewise-affine horseshoes as `model maps' from which the construction is made.
If the model map which we start with is H\"older but not, for instance, differentiable almost everywhere, then our construction and estimates, suitably modified, still apply.
\begin{remark}
Similar examples were already constructed in~\cite{dFHazT3}.
However, the construction there made determining the possible conjugacy between different $f_a$ difficult.
Our approach here simplifies this, while also giving the additional dynamical information in Theorem~\ref{mainthm:HolderSobolev} above.
\end{remark}
Following this we also construct examples of endomorphisms with infinite topological entropy satisfying the stronger Zygmund condition.
Namely, the following is shown.
\begin{maintheorem}\label{mainthm:Zygmund}
There exists $f\in C([0,1],[0,1])$ with the following properties
\begin{enumerate}
\item
\begin{enumerate}
\item
$f$ is not topologically conjugate to the examples in Theorem~\ref{mainthm:HolderSobolev}
\item
$h_\mathrm{top}(f)=+\infty$
\item
$f$ is not expansive, $h$-expansive or asymptotically $h$-expansive
\end{enumerate}
\item
$f$ satisfies the little Zygmund condition
\end{enumerate}
\end{maintheorem}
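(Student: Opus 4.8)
\section*{Proof plan for Theorem~\ref{mainthm:Zygmund}}

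The plan is to build $f$ as a self-similar, multiscale map whose oscillations sit exactly at the \emph{Zygmund-critical ratio}, and then to read off the four conclusions from that structure. The starting point is a rigidity observation that dictates the whole construction: for a (piecewise-affine) model the second difference $f(x+h)+f(x-h)-2f(x)$, evaluated at the scale of a turning point, is comparable to the jump in slope there; taking $x$ a peak of height $A$ above its two adjacent valleys at distance $h$ gives second difference $\approx -2A$, hence Zygmund ratio $\approx 2A/h$. An invariant $N$-branch horseshoe covering its own interval $[0,\delta]$ has $A\sim\delta$ and $h\sim\delta/N$, so its Zygmund seminorm is $\gtrsim N$ \emph{no matter how small $\delta$ is or how the corners are rounded}. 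Thus, in sharp contrast with the examples underlying Theorem~\ref{mainthm:HolderSobolev}, the infinite entropy here \emph{cannot} be carried by any high-complexity horseshoe at a single scale, nor by countably many disjoint invariant horseshoes of growing branch number. The only route compatible with a finite (indeed vanishing) Zygmund constant is a \emph{nowhere-monotone}, Weierstrass-type graph that folds at \emph{all} scales simultaneously, with the amplitude at scale $h$ held at the borderline size $O(h)$. Concretely I would fix a point $p$ (say $p=0$) and write $f=\sum_{n}g_{n}$, where $g_{n}$ is a fixed finite model rescaled to dyadic scale $2^{-n}$ with amplitude comparable to $2^{-n}$, arranged so that $f$ is a genuine self-map of $[0,1]$ realising the critical modulus $\omega(t)=t\log(1/t)$ near $p$.

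Infinite entropy I would then extract from this borderline modulus by counting $(n,\varepsilon)$-separated sets. The modulus $\omega(t)=t\log(1/t)$ iterates to $\omega^{n}(t)\sim t\,(\log(1/t))^{n}$, so two points can be kept $\varepsilon$-apart for $n$ steps once their initial separation exceeds $(\omega^{n})^{-1}(\varepsilon)\sim n^{-n}$; the number of distinguishable orbits therefore grows like $n^{n}$, which leaves \emph{no} finite upper bound for $h_\mathrm{top}(f)$. The task is to turn this into an honest lower bound, by using the dense multiscale folds to manufacture, for every $M$, an $(n,\varepsilon)$-separated set of cardinality $\ge e^{Mn}$; since the folds occur at every scale accumulating at $p$, the separation can be forced arbitrarily close to $p$, yielding $h_\mathrm{top}(f)=+\infty$. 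I expect this entropy lower bound to be the \emph{main obstacle}: folding is precisely what creates separation, yet folding is also what the little Zygmund condition most severely constrains, so the construction must live exactly at the critical threshold where these two demands are simultaneously met.

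The little Zygmund estimate I would carry out as for a lacunary series. Fixing $h$, split $f(x+h)+f(x-h)-2f(x)=\sum_{n}\bigl(g_{n}(x+h)+g_{n}(x-h)-2g_{n}(x)\bigr)$ into the coarse scales $2^{-n}\ge h$, on which each $g_{n}$ is smooth and contributes at most $\|g_{n}''\|_{\infty}h^{2}\sim 2^{n}h^{2}$, and the fine scales $2^{-n}<h$, on which each $g_{n}$ contributes at most its amplitude $\sim 2^{-n}$; both geometric sums telescope to $O(h)$, giving the big Zygmund bound. To upgrade this to the \emph{little} Zygmund condition I would insert a slowly decaying gain $\varepsilon_{n}\to 0$ multiplying the model at scale $n$ (equivalently, taking amplitudes $o(2^{-n})$ summably), so that the dominant terms near $2^{-n}\approx h$ acquire a factor $\varepsilon_{\lfloor\log_{2}(1/h)\rfloor}\to 0$ and the total becomes $o(h)$ uniformly in $x$; one then checks that this gain is mild enough that the folds, hence the separation estimate of the previous paragraph, survive.

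The remaining assertions are comparatively soft and I would treat them last. For the failure of expansiveness in all three senses it suffices to localise the entropy argument: every neighbourhood of $p$ already contains the tail $\bigcup_{n\ge N}g_{n}$ and therefore supports arbitrarily large entropy, so Bowen's tail entropy $h^{*}(f)$ is infinite; by Misiurewicz's criterion $f$ is not asymptotically $h$-expansive, a fortiori not $h$-expansive and not expansive. For the non-conjugacy in~(1)(a) I would exhibit a topological-conjugacy invariant on which the two families differ: the examples of Theorem~\ref{mainthm:HolderSobolev} carry their infinite entropy on a countable collection of \emph{disjoint} invariant subsystems accumulating at a point, so the point-function $x\mapsto$ (local/tail entropy at $x$) and the structure of the non-wandering set have one form there, whereas the nowhere-monotone map constructed here spreads entropy densely and so realises a different such profile; should this prove delicate, the coarser invariant given by the realised set of periods can be used instead.
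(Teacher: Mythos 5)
Your opening rigidity observation is correct and matches the paper's own remark: any $f$-invariant $k$-branched horseshoe forces a second difference $\gtrsim kt$ at some turning point, so the entropy of $f$ cannot sit on a single-scale invariant horseshoe, nor on countably many disjoint ones of growing branch number. But the conclusion you draw from it --- that the \emph{only} remaining route is a Weierstrass-type sum folding at all scales with amplitude $O(h)$ at scale $h$ --- is false, and it steers you away from the construction that actually works. The obstruction constrains horseshoes for $f$ itself, not for $f^2$. The paper's map is built so that the $2n$-branched horseshoes live in the \emph{second} iterate: $f$ restricted to $[\tfrac12,1]$ is an orientation-reversing little Zygmund homeomorphism onto $[0,\tfrac12]$ with derivative tending to $-\infty$ at $1$, expanding chosen intervals $I_n$ (accumulating at $1$) by a factor $n$; on the images $I_n'=f(I_n)$ (accumulating at $0$), $f$ is an affine rescaling of $x\mapsto\tfrac1{2n}(1-\cos(2\pi nx))$, which maps $I_n'$ back onto $I_n$ in a $2n$-to-$1$ way with slopes uniformly bounded by $\pi$ and vanishing derivative at the endpoints. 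Each piece is little Zygmund (one because it is a $C^1$-glued uniformly Lipschitz family, the other by choice of the singular homeomorphism, e.g.\ a rescaling of $x\log(1/x)$), but $f^2|_{I_n}$ is a $2n$-branched horseshoe, whence $h_{\mathrm{top}}(f)=\tfrac12 h_{\mathrm{top}}(f^2)=\infty$. The expansion and the folding are carried by different iterates, so neither violates the Zygmund condition; only $f^2$ does, which is permitted.

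The genuine gap in your plan is the entropy lower bound, and you say so yourself. The computation with the iterated modulus $\omega^n(t)\sim t(\log(1/t))^n$ only shows that the modulus of continuity imposes \emph{no finite upper bound} on $h_{\mathrm{top}}(f)$; it gives no lower bound whatsoever. A Weierstrass-type sum $\sum_n g_n$ with amplitudes $o(2^{-n})$ at scale $2^{-n}$ need not have a horseshoe in any iterate, and by Misiurewicz's theorem positive (let alone infinite) topological entropy for an interval map is equivalent to the presence of $s_n$-branched horseshoes for iterates $f^{k_n}$ with $\tfrac1{k_n}\log s_n$ large; your construction supplies no such horseshoes and no separated-set count, so property (1)(b) is unproven. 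The little Zygmund estimate via the dyadic splitting of the second difference is plausible for a lacunary sum, but without the entropy mechanism the whole approach does not reach the theorem. If you want to salvage your framework, the fix is exactly the paper's device: do not try to make $f$ itself fold with high multiplicity at small amplitude; instead factor each high-branch horseshoe through two applications of $f$, one a bounded-slope fold and one a Zygmund-but-singular expansion.
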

\begin{remark}
Observe that Theorem~\ref{mainthm:Zygmund} gives an affimative answer to Benedicks' question stated above.
\end{remark}
%
\begin{comment}
Since the space $C^Z([0,1],[0,1])$, when endowed with the Zygmund norm, has the property that 
an open neighbourhood of the identity consists of maps with zero topological entropy,
we also get the following.
%
\begin{maintheorem}\label{cor:zygmund-hinft-notgeneric}
Namely, in $C^Z([0,1],[0,1])$, endowed with the Zygmund norm, infinite topological entropy is not a generic property (it is not even a dense property),
but it does contain mappings with infinite topological entropy.  
\end{maintheorem}
%
\begin{remark}
This gives an answer to C.\ Tresser's question above.
\end{remark}
\end{comment}
%
\subsection{Structure of the paper.}
In Section~\ref{subsect:not+term}, 
we set up notation and terminology for the rest of the paper, and recall some basic facts.
In Section~\ref{sect:Holder+Sobolev} we give a proof of Theorem~\ref{mainthm:HolderSobolev}.
First we construct the one-parameter family $f_a$ 
from which it will be clear that properties 1(a)--1(c) of Theorem~\ref{mainthm:HolderSobolev} hold for all parameters $a\in (0,1]$.
After this an elementary proof of properties 2(a)--2(c), {\it i.e.}, for $a=1$, of Theorem~\ref{mainthm:HolderSobolev} is given.
Following this we prove some auxiliary propositions that are then used to prove properties 3(a)--3(b).
In Section~\ref{sect:Zygmund}, after recalling some basic definitions we give a proof of Theorem~\ref{mainthm:Zygmund}.
Finally, in Section~\ref{sect:conclusion} we end with some remarks and open problems.

\subsection{Notation and terminology.}\label{subsect:not+term}
Throughout this article, we use the following notation.
We denote the Euclidean norm in $\mathbb{R}$ by $|\cdot|_{\mathbb{R}}$ or $|\cdot|$ when there is not risk of ambiguity.
We denote the Euclidean distance by $d(\cdot,\cdot)$.

\subsubsection{H\"older Mappings.}
Let $\alpha\in(0,1)$.
Given a subset $\Omega$ of $\mathbb{R}$, let $C^\alpha(\Omega,\mathbb{R})$ denote 
the set of real-valued functions $f$ on $\Omega$ 
satisfying
the {\it $\alpha$-H\"older condition}
\begin{equation}\label{cond:Holder_condition}
[f]_{\alpha,\Omega}
\;\stackrel{\tiny{\mathrm{def}}}{=}\;
\sup_{x,y\in \Omega; x\neq y} 
\frac{d(f(x),f(y))}{d(x,y)^\alpha}
<\infty \ .
\end{equation}
When the domain of $f$ is clear we will write $[f]_{\alpha}$ instead of $[f]_{\alpha,\Omega}$.
The set $C^\alpha(\Omega,\mathbb{R})$ has a linear structure and
$[\,\cdot\,]_{\alpha,\Omega}$ 
defines a semi-norm\footnote{This also induces a pseudo-distance which we will call the {\it $C^\alpha$-pseudo-distance}.}, 
which we call the {\it $C^\alpha$-semi-norm}.
Consequently
\begin{equation}
\|f\|_{C^\alpha(\Omega,\mathbb{R})}
\;\stackrel{\tiny{\mathrm{def}}}{=}\;
\|f\|_{C^0(\Omega,\mathbb{R})}+[f]_{\alpha,\Omega}
\end{equation}
defines a complete norm on 
$C^\alpha(\Omega,\mathbb{R})$. 

For the same subset $\Omega$ of $\mathbb{R}$ let $C^\mathrm{Lip}(\Omega,\mathbb{R})$ denote the set of 
real-valued functions $f$ on $\Omega$ satisfying the {\it Lipschitz condition}
\begin{equation}
[f]_{\mathrm{lip},\Omega}
\;\stackrel{\tiny{\mathrm{def}}}{=}\;
\sup_{x,y\in \Omega; x\neq y} 
\frac{d(f(x),f(y))}{d(x,y)}
<\infty \ .
\end{equation}
As before, this defines a semi-norm on the linear space $C^{\mathrm{Lip}}(\Omega,\mathbb{R})$.
We denote the corresponding norm by $\|\,\cdot\,\|_{C^\mathrm{Lip}(\Omega,\mathbb{R})}$.
Then, as above, this defines a Banach space structure on $C^\mathrm{Lip}(\Omega,\mathbb{R})$.

\subsubsection{Sobolev Mappings.}
Given an open subset $\Omega$ of $\mathbb{R}$, the Sobolev class $W^{1,p}(\Omega)$ consists of 
measurable functions 
$f\colon \Omega\to\mathbb{R}$
for which the first distributional partial derivative is defined and belongs to $L^p(\Omega)$.
Then $W^{1,p}(\Omega)$ is a Banach space with respect to the norm
\begin{equation}
\|u\|_{1,p}=\|u\|_{L^p}+\|Du\|_{L^p}
\end{equation}
Define the space
\begin{equation}
\mathbb{W}^{1,p}(\Omega,\mathbb{R})
=
W^{1,p}\left(\Omega,\mathbb{R}\right)\cap C^0\left(\overline{\Omega},\mathbb{R}\right)
\end{equation}
For $f\in\mathbb{W}^{1,p}(\Omega,\mathbb{R})$ define 
\begin{equation}
[f]_{W^{1,p},\Omega}
=\left(\int_\Omega|Df(x)|^p\,dx\right)^{\frac{1}{p}}
\end{equation}
Observe that 
$\mathbb{W}^{1,p}(\Omega,\mathbb{R})$ is a linear space and
that $[\,\cdot\,]_{W^{1,p},\Omega}$ defines a semi-norm which we call the {\it $W^{1,p}$-semi-norm}.
Setting
\begin{equation}
\|f\|_{\mathbb{W}^{1,p}(\Omega,\mathbb{R})}
=
\|f\|_{C^0(\overline\Omega,\mathbb{R})}
+[f]_{W^{1,p},\Omega}
\end{equation}
this defines a norm on 
$\mathbb{W}^{1,p}(\Omega,\mathbb{R})$ which is complete, and thus
$\mathbb{W}^{1,p}(\Omega,\mathbb{R})$ is endowed with the structure of a Banach space.

\subsubsection{Topological Entropy and Expansivity.}
Let $(X,d)$ be a compact metric space.
Let $f$ be a continuous self-map of $(X,d)$.
For each $n\in\mathbb{N}$ define the distance function
\begin{equation}
d_n^f(x,y)=\max_{0\leq k<n} d(f^k(x),f^k(y))
\end{equation}
\begin{comment}
A set $E\subset X$ is {\it $(n,\delta)$-separated} with respect to $f$ if for any $x,y\in E$, $x\neq y$,
\begin{equation}
\max_{0\leq k<n}d(f^k(x),f^k(y))>\delta
\end{equation}
Define
\begin{equation}
s_f(n,\delta)
&=\max \left\{ \card E \ : \ E \ \mbox{is} \ (n,\delta)\mbox{-separated for} \ f\right\} 
\end{equation}
\end{comment}
Given sets $E,F\subset X$, we say that the set $E$ {\it $(n,\delta)$-spans} the set $F$ with respect to $f$ 
if for any $x\in F$, there exists $y\in E$ such that $d_n^f(x,y)<\delta$.
Let 
\begin{align}
r_f(n,\delta;F)
&=\min \bigl\{ \card E \ : \ E \ (n,\delta)\mbox{-spans} \ F \ \mbox{with respect to} \ f\bigr\}
\end{align}
(Note that: 
(i) if $F$ is compact then $r_f(n,\delta;F)<\infty$;
(ii) $r_f(n,\delta;F)$ increases as $\delta$ decreases.)
For each compact set $K\subset X$,
define\footnote{Here we depart from the notation originally due to Bowen~\cite{Bowen1972}.}
\begin{equation}
r_f(\delta;K)
=\limsup_{n\to\infty}\frac{1}{n}\log r_f(n,\delta;K)
\end{equation}
and
\begin{equation}
h(f;K)=\lim_{\delta\to 0} r_f(\delta;K)
\end{equation}
Since $X$ is compact we can define
\begin{equation}
h(f,\delta)=h(f,\delta;X)
\end{equation}
The {\it topological entropy} of $f$ is defined by
\begin{equation}
h_\mathrm{top}(f)=\lim_{\delta\to 0}h(f,\delta)=\sup_K h(f;K)
\end{equation}
For each $\epsilon>0$ and $x\in X$ define
\begin{equation}
\Phi_\epsilon(x)
=
\bigcap_{n\geq 0} f^{-n}B_\epsilon(f^n(x))
=
\left\{y : d\left(f^n(x),f^n(y)\right)\leq \epsilon, \ \forall n\geq 0\right\}
\end{equation}

Recall that $f$ is {\it expansive} if there exists $\epsilon>0$ with the following property: 
given any $x,y\in X$,
if $d(f^k(x),f^k(y))<\epsilon$, for all $k\in\mathbb{N}$, then $x=y$.
Define
\begin{equation}
h_f^*(\epsilon)=\sup_{x\in X} h(f;\Phi_\epsilon(x))
\end{equation}
Then $f$ is 
{\it $h$-expansive} if $h_f^*(\epsilon)=0$ for some $\epsilon>0$; and is 
{\it asymptotically $h$-expansive} if $\lim_{\epsilon\to 0}h_f^*(\epsilon)=0$.
%
%
%
%
%
%
%
%
%
%
%
\section{Examples in H\"older and Sobolev classes.}\label{sect:Holder+Sobolev}
We construct a family of endomorphisms $f_a$ 
of the unit interval, depending upon the parameter 
$a\in (0,1]$, such that each $f_a$ satisfies 
$h_{\mathrm {top}}(f_a)=\infty$, it is not expansive, $h$-expansive or even asymptotically $h$-expansive,
and such that all the $f_a$ are topologically conjugate.
The main part of the work will then be in showing each $f_a$ has some intermediate regularity between $C^0$ and Lipschitz.  
\begin{remark}
On compact one-manifolds, a theorem of Misiurewicz~\cite{Misiurewicz1978} states that positive topological entropy,
and thus infinite topological entropy, must come from some iterate possessing a horseshoe.
More precisely, if $h_\mathrm{top}(f)>0$, then 
there exist sequences of positive integers $k_n$ and $s_n$ such that, for each $n$,
$f^{k_n}$ possesses an $s_n$-branched horseshoe\footnote{
A map $g$ possesses an {\it $s$-branched horseshoe} if there is an interval $J$ with $s$ pairwise disjoint subintervals $J_1,J_2,\ldots,J_s$,
such that $g(J_j)\subseteq J$ for $j=1,2,\ldots,s$.}
and
\begin{equation}
\lim_{n\to\infty}\frac{1}{k_n}\log s_n=h_\mathrm{top}(f)
\end{equation}
Thus examples given below, which are constructed so that certain iterates possess horseshoes, 
are somehow indicative of the general case.
\end{remark}
It will be useful to first consider an auxiliary 
family $g_{a,b}$ of interval maps defined as follows.  
First fix a positive integer $b$.  
Given an arbitrary interval $J$, 
let $A_J$ denote the unique orientation-preserving affine bijection from $J$ to $[0,1]$. 
Subdivide the interval $[0,1]$ into $b$ closed intervals 
$
J_{b,0}, J_{b,1},\ldots,J_{b,b-1}
$
of equal length, ordered from left to right. 
Let 
$A_{b,k}=A_{J_{b,k}}$ for each $k=0,1,\ldots,b-1$. 
Let 
$\nu$ denote the unique orientation-reversing affine bijection of $[0,1]$ to itself, 
For each $k=0,1,\ldots,b-1$, define
\begin{equation}
g_{1,b}(x) = \nu^k\circ A_{b,k}(x), \qquad \forall x\in J_{b,k}\,.
\end{equation}
More explicitly
\begin{equation}\label{eq:g1b-explicit}
g_{1,b}(x)
=\left\{\begin{array}{ll}
bx-k     & x\in J_{b,k}, \ \ k \ \mbox{even} \\
(k+1)-bx & x\in J_{b,k}, \ \ k \ \mbox{odd}
\end{array}\right.
\end{equation}
Observe that $g_{1,b}$ is continuous on $[0,1]$.
Also, $[0,1]$ possesses a $g_{1,b}$-invariant subset on which $g_{1,b}$ it acts as the unilateral shift on $b$ symbols. 
In fact, $h_{\mathrm{top}}(g_{1,b})=\log b$
(see e.g.~\cite[Section 3.2.c]{KandH}).

Next, take a continuous one-parameter family $\varphi_a$, $a\in (0,1]$, of orientation-preserving homeomorphisms of $[0,1]$, 
with $\varphi_1=\mathrm{id}$, and define
\begin{equation}
g_{a,b}=\varphi_a\circ g_{1,b}\circ \varphi_a^{-1}
\end{equation}
For example, we could take $\varphi_a$ equal to $q_a(x)=x^a$, the power function of exponent $a$.
(Observe that in this case $g_{a,b}$ is $C^a$ but not $C^\alpha$ for any $\alpha>a$, provided that $b\geq 2$.)
Then $g_{a,b}$ is continuous on $[0,1]$.
As topological entropy is invariant under topological conjugacy, we also have 
$h_{\mathrm{top}}(g_{a,b})=\log b$, for each $a\in (0,1]$ and each positive integer $b$.
We call $b$ the {\it number of branches} of $g_{a,b}$ and $a$ the {\it order of singularity}.  

We now define the family $f_a$ as follows.  
For each positive integer $n$ define the interval $I_n=(2^{-n},2^{-n+1}]$ and let $f_a$ be given by
\begin{equation}
f_a(x)=
\left\{\begin{array}{ll}
A_{I_n}^{-1}\circ g_{a,2n+1}\circ A_{I_n}(x) & x\in I_n, \ n=1,2\ldots \\
0 & x=0
\end{array}\right.
\end{equation}
Observe that, since $g_{a,2n+1}$ fixes the endpoints of $[0,1]$ and is continuous, the map $f_a$ is also continuous.
Also, since, for each fixed $b$, all the functions $g_{a,b}$, $a\in (0,1]$ are topologically conjugate, 
it follows that all the functions $f_a$, $a\in (0,1]$, are also 
all topologically conjugate. Namely, $f_a=\psi_a^{-1}\circ f_1\circ \psi_a$ where
\begin{equation}
\psi_a(x)=A_{I_n}^{-1}\circ \varphi_a\circ A_{I_n}(x) \qquad \forall x\in I_n, \ \forall n\in\mathbb{N}
\end{equation}
Notice that the closure of each interval $I_n$ is totally invariant.  
Since the 
topological entropy of a map is the supremum of the topological entropy of its restriction to all closed invariant subsets,
since topological entropy is invariant under topological conjugacy (see e.g.~\cite[Section 3.1.b]{KandH}) and, as was stated above, 
$h_{\mathrm{top}}(g_{a,b})=\log b$ for all $b$,   
it follows that
\begin{equation}\label{eq:infinite_entropy-fabI}
h_{\mathrm{top}}(f_a) 
\;\geq\; 
\sup_n h_{\mathrm{top}}(f_a|_{I_n}) 
\;=\;
\sup_n h_{\mathrm{top}}(g_{a,2n+1})
=+\infty \ .
\end{equation}
Next, observe that, as $f_a$ has arbitrarily small invariant subsets (namely the intervals $I_n$) 
the function $f_a$ cannot be expansive.
In fact, since 
$h(f_a,I_n)=\log (2n+1)$ for each $n$, it follows that 
\begin{equation}
\lim_{\epsilon\to 0}h^*_{f_a}(\epsilon)
\;\geq\;
\lim_{n\to\infty} h(f_a;I_n)
\;=\;+\infty
\end{equation}
Thus $f_a$ is neither $h$-expansive nor asymptotically $h$-expansive.
Therefore properties 1(a)--1(c) of Theorem~\ref{mainthm:HolderSobolev} hold.
\begin{remark}
That $f_a$ is not asymptotically $h$-expansive could 
also be shown using topological conditional entropy 
in the following way.
By~\cite[Proposition 3.3]{Misiurewicz1976} 
infinite topological entropy $h_\mathrm{top}(f_a)$ 
implies infinite topological conditional entropy $h^*(f_a)$.
However, by~\cite[Corollary 2.1(b)]{Misiurewicz1976} $f_a$ is asymptotically $h$-expansive if and only if $h^*(f_a)=0$. 
\end{remark}
\begin{figure}[htp]
\begin{center}
\psfrag{0}[][]{ $0$} 
\psfrag{x}[][][1]{$x$}
\psfrag{f}[][][1]{$\ \ f(x)$} 

\includegraphics[width=3.5in]{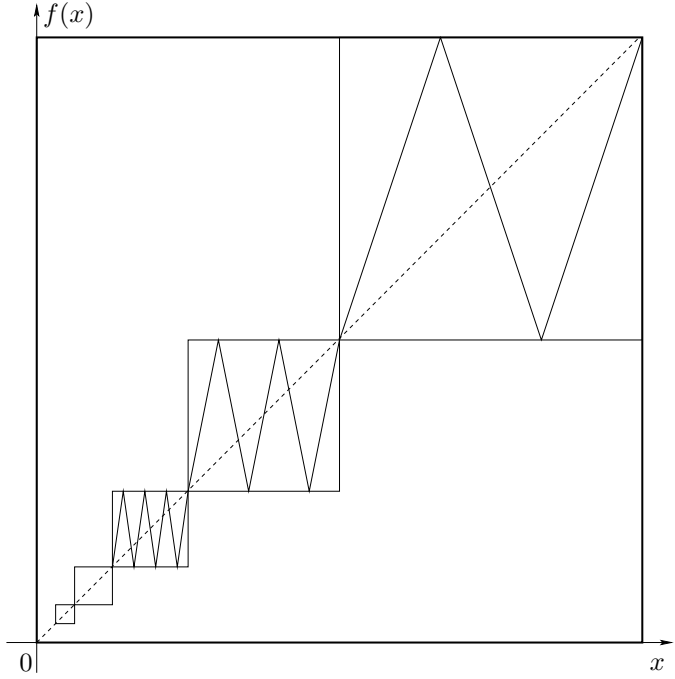}
\end{center}
\caption[slope]{\label{figendo} The graph of a H\"older interval endomorphism with infinite topological entropy.}
\end{figure}
\begin{proof}[Proof of Theorem~\ref{mainthm:HolderSobolev} 2(a)--2(c)]
For each positive integer $n$, 
define the subintervals 
$I_{n,k}=A_{I_n}^{-1}(J_{2n+1,k})$ of $I_n$ for $k=0,1,\ldots,2n$.
These denote the maximal closed subintervals of $I_n$ on which $f$ is affine.

\vspace{5pt}

\noindent
(a) 
Take distinct points $x,y\in [0,1]$.
There are three cases to consider.
\begin{description}
\item[($x\in I_n, y\in I_m$, $n>m$)]
Since $I_m$ and $I_n$ are both $f$-invariant, 
$f(x)\in I_n$ and $f(y)\in I_m$.
Moreover, observe that
\begin{equation}
|f(x)-f(y)|\leq |2^{-n}-2^{-m+1}|<2^{-m+1}
\end{equation}
together with
\begin{equation}
|x-y|\geq |2^{-n+1}-2^{-m}|\geq 2^{-m-1}
\end{equation}
implies that
\begin{equation}
\frac{|f(x)-f(y)|}{\omega(|x-y|)}
\leq \frac{2^{-m+1}}{2^{-m-1}\log 2^{m+1}}
=\frac{4}{(m+1)\log 2} \ .
\end{equation}
\item[($x=0, y\in I_m$)]
Applying the same argument as in the previous case and observing that $f(x)=x=0$ we find that
\begin{equation}
\frac{|f(x)-f(y)|}{\omega(|x-y|)}
\leq \frac{2^{-m+1}}{2^{-m-1}\log 2^{m+1}}
= \frac{4}{(m+1)\log 2}
\leq \frac{2}{\log 2} \ .
\end{equation}

\item[($x\in I_n, y\in I_m$, $n=m$)]
If $x$ and $y$ do not lie in the same branch of $f|_{I_k}$, 
then there exists $y'$, in the same branch as $x$, 
satisfying $f(y)=f(y')$ and $|x-y|>|x-y'|$.
Moreover,
\begin{equation}
|x-y'|
\leq \frac{|I_m|}{2m+1}
=\frac{1}{2^m(2m+1)} \ .
\end{equation}
Thus
\begin{align}
\frac{|f(x)-f(y)|}{\omega(|x-y|)}
\leq\frac{|f(x)-f(y')|}{\omega(|x-y'|)}
&=\frac{(2m+1)|x-y'|}{|x-y'|\log (|x-y'|^{-1})}\\
&\leq \frac{2m+1}{\log 2^m (2m+1)}\\
&\leq \frac{2}{\log 2}+1 \ .
\end{align}
\end{description} 
In each of these cases, for $x,y\in [0,1], x\neq y$,
\begin{equation}
\frac{|f(x)-f(y)|}{\omega(|x-y|)}\leq \frac{2}{\log 2}+1
\end{equation}
and hence $f$ has modulus of continuity $\omega$, which completes the proof of part (i).

\begin{comment}
Next, observe that $f$ has variation
\begin{equation*}
\mathrm{Var}_{[0,1]}(f)
=\sum_{k=1}^\infty \mathrm{Var}_{[0,1]}(f|I_k)
=\sum_{k=1}^\infty (2k+1)|I_k|
=\sum_{k=1}^\infty \frac{(2k+1)^2}{e^{Lk^2(2k+1)^2}} \ ,
\end{equation*}
which is finite.
Thus $f$ has bounded variation. 
Moreover $f$ satisfies Lusin's $N$-property, and it is clearly continuous.
Therefore, by~\cite[Theorem 4, Ch. IX, Section 3]{NatansonBook}, 
this implies that $f$ is absolutely continuous. 
\end{comment}

\vspace{5pt}

\noindent
(b) Observe that $f$ is differentiable except at the endpoints of the intervals $I_{k,l}$.
Hence
\begin{equation}
\left|f'|_{I_n}\right|=\frac{|I_n|}{|I_{n,k}|}=2n+1 \ .
\end{equation}
Therefore, as the $I_n$ form a measurable partition of $[0,1]$,
\begin{align}
\int_{[0,1]} |f'|^p\,dx
=\sum_{n=1}^\infty \int_{I_n}|f'|^p\,dx
&=\sum_{n=1}^\infty (2n+1)^p\int_{I_n}\,dx\\
&=\sum_{n=1}^\infty (2n+1)^p 2^{-k}\\
&\leq \sum_{n=1}^\infty n^p 2^{-(n-1)/2}\\
&= 2^{1/2}\sum_{n=1}^\infty n^p 2^{-n/2} \ .
\end{align}
However, this last series is finite. This follows, for example, 
since $n^p 2^{-n/2}<n^{-2}$ for all $n$ sufficiently large 
and by recalling that 
$\sum_{n=1}^\infty n^{-2}<\infty$.
Consequently the Sobolev norm of $f$ is finite and hence $f\in W^{1,p}([0,1])$.

\vspace{5pt}

\noindent
(c) First note that as $g_{1,b}$ preserves Lebesgue measure $\mu$ for each $b$, it follows trivially that Lebesgue measure is invariant under $f_1$.
Since $h_\mu(g_{1,b})=\log b$, it also follows that Lebesgue measure is a measure of maximal entropy.
Hence the theorem is shown.
\end{proof}
For each positive integer $n$, by performing the same construction as above but just on the union of the intervals 
$I_n, I_{n+1},\ldots$ 
we also get the following corollary.
\begin{corollary}\label{cor:a=1_accum_on_id}
There exists a sequence $f_n\in C^0([0,1],[0,1])$ satisfying properties 2(a)-2(c) in Theorem~\ref{mainthm:HolderSobolev} above and
with the additional property that $\lim_{n\to\infty} f_n= \mathrm{id}$ where convergence is taken
\begin{itemize}
\item
in the $C^\alpha$-topology for any $\alpha\in(0,1)$,
\item
in the $W^{1,p}$-topology for any $p\in[1,\infty)$.
\end{itemize}
\end{corollary}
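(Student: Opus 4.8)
The plan is to define $f_n$ by repeating the construction of $f_1$ verbatim, but only on the tail $\bigcup_{k\ge n}I_k=(0,2^{-n+1}]$, and setting $f_n=\mathrm{id}$ on $[2^{-n+1},1]$. Explicitly, $f_n(x)=A_{I_k}^{-1}\circ g_{1,2k+1}\circ A_{I_k}(x)$ for $x\in I_k$ with $k\ge n$, $f_n(x)=x$ for $x\in[2^{-n+1},1]$, and $f_n(0)=0$. Because each $g_{1,2k+1}$ fixes the endpoints of $[0,1]$, the block on $I_n$ fixes $2^{-n+1}$ and glues continuously to the identity, so $f_n\in C^0([0,1],[0,1])$; moreover $f_n$ still contains horseshoes with $2k+1$ branches for every $k\ge n$, so $h_{\mathrm{top}}(f_n)=+\infty$. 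Properties $2(a)$--$2(c)$ carry over to $f_n$ by the \emph{same} arguments used for $f_1$: the indices appearing in the modulus-of-continuity estimate only increase (so the bounds are no worse, and on $[2^{-n+1},1]$ the identity respects $\omega$ with a bounded constant), the integral in part $(b)$ reduces to the convergent tail $\sum_{k\ge n}(2k+1)^p2^{-k}$, and in part $(c)$ every horseshoe block and the identity preserve Lebesgue measure.

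It remains to establish the convergence $f_n\to\mathrm{id}$, which is the new content. Put $g_n=f_n-\mathrm{id}$; this is supported in $[0,2^{-n+1}]$ and vanishes at the gluing point $2^{-n+1}$. Whenever $g_n(x)\neq0$ both $x$ and $f_n(x)$ lie in $(0,2^{-n+1}]$, so $\|g_n\|_{C^0}\le 2^{-n+1}\to0$, which disposes of the $C^0$ term in both norms. For the H\"older seminorm I would reduce an arbitrary pair $x\neq y$ to a pair inside $(0,2^{-n+1}]$: if $x,y\in[2^{-n+1},1]$ then $g_n(x)-g_n(y)=0$, while if $x\in(0,2^{-n+1}]$ and $y\in[2^{-n+1},1]$ then, using $g_n(y)=g_n(2^{-n+1})=0$ and $|x-2^{-n+1}|\le|x-y|$,
\begin{equation*}
\frac{|g_n(x)-g_n(y)|}{|x-y|^\alpha}\le\frac{|g_n(x)-g_n(2^{-n+1})|}{|x-2^{-n+1}|^\alpha}.
\end{equation*}
On the remaining case $x,y\in(0,2^{-n+1}]$, the triangle inequality and the modulus-of-continuity bound for $f_n$ (with the same constant $C$ as in $2(a)$, independent of $n$) give
\begin{equation*}
\frac{|g_n(x)-g_n(y)|}{|x-y|^\alpha}\le C\,|x-y|^{1-\alpha}\log\frac{1}{|x-y|}+|x-y|^{1-\alpha}.
\end{equation*}
Since $|x-y|\le2^{-n+1}$ and $t\mapsto t^{1-\alpha}\log(1/t)$ is increasing near $0$ for fixed $\alpha<1$, for large $n$ the supremum is attained at $t=2^{-n+1}$, whence $[g_n]_\alpha\to0$ and $\|f_n-\mathrm{id}\|_{C^\alpha}\to0$.

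For the Sobolev norm, $g_n'=0$ on $[2^{-n+1},1]$ and $|g_n'|=|f_n'-1|\le 2k+2$ on each $I_k$ with $k\ge n$, so
\begin{equation*}
[g_n]_{W^{1,p}}^p=\int_0^1|g_n'|^p\,dx\le\sum_{k\ge n}(2k+2)^p\,2^{-k},
\end{equation*}
the tail of a convergent series, which tends to $0$; together with the $C^0$ bound this yields $\|f_n-\mathrm{id}\|_{\mathbb{W}^{1,p}}\to0$. I expect the H\"older estimate to be the only delicate point: one must bound $[g_n]_\alpha$ \emph{uniformly across the gluing point}, and the estimate succeeds precisely because the large local slopes $2k+1$ are always paired with proportionally small distances, so that $t^{1-\alpha}\log(1/t)\to0$ for every $\alpha<1$---the same mechanism that would break down at the Lipschitz endpoint $\alpha=1$.
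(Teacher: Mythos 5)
Your construction is exactly the one the paper intends: its entire justification of this corollary is the one-line remark that one performs the same construction on the tail $\bigcup_{k\ge n}I_k$, which is precisely your $f_n$ (identity on $[2^{-n+1},1]$, horseshoe blocks on $I_k$ for $k\ge n$). Your convergence estimates --- reducing the H\"older seminorm to pairs in $(0,2^{-n+1}]$ using that $f_n-\mathrm{id}$ vanishes at the gluing point, then using $t^{1-\alpha}\log(1/t)\to 0$ and the convergent tail $\sum_{k\ge n}(2k+2)^p2^{-k}\to 0$ --- are correct and simply supply the details the paper leaves implicit.
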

We recall that maps with modulus of continuity $t\log(1/t)$ are in the H\"older class 
$C^\alpha$ for every $\alpha\in [0,1)$, but they are not necessarily Lipschitz.  
Moreover, the map $f_1$ is a $C^\alpha$-limit 
of piecewise-affine maps. Hence it lies in the $C^\alpha$-boundary of the space of 
Lipschitz maps.  
When $a\neq 1$, the map $f_a$ does not satisfy this property.  
\begin{comment}
More precisely, we have the following result.
%
\begin{theorem}\label{thm:eg2}
For $f=f_a$, $a\in (0,1)$, 
the following holds.
\begin{itemize}
\item[(i)]
$f$ 
is $C^\alpha$ if and only if $\alpha\leq a \,.$
\item[(ii)]
$f$ 
is $W^{1,p}$ if and only if $p<(1-a)^{-1} \,.$ 
\item[(iii)]
$h_{\mathrm{top}}(f)=+\infty\,. $
\end{itemize}
\end{theorem}
\end{comment}
%
The proof of Theorem~\ref{mainthm:HolderSobolev} 3(a)--3(b) could be made 
using the argument presented above in the proof of
properties 2(a)-2(b) of Theorem~\ref{mainthm:HolderSobolev}. 
However, we give a different proof below. 
For that we need the following gluing principle, which is essentially an application of Jensen's inequality.
\newpage
\begin{proposition}[Gluing Principle]\label{prop:gluing}
Let $\omega$ be a continuous, monotone-increasing function, locally concave at $\omega(0)=0$. 
Let $f$ be a continuous self-mapping of the compact interval $I$.
Let 
$I_1,I_2,\ldots$ denote a collection of closed intervals with pairwise disjoint interiors, covering $I$, and 
with the property that $f|_{I_k}$ has modulus of continuity $\omega$, for all $k$.
Let $C_k$ denote the $\omega$-semi-norm of $f|_{I_k}$.
If
\begin{itemize}
\item[(i)]
$\sum_{k=1}^\infty C_k<\infty$ then $f$ has modulus of continuity $\omega$ with 
$\omega$-semi-norm bounded by $C=\sum_{k=1}^\infty C_k$.  
\begin{comment}
\item[(i')]
If each $f|_{I_k}$ is onto $I$ for each $k$ and $\sup_k C_k<\infty$ 
then $f$ has modulus of continuity $\omega$ with $\omega$-semi-norm bounded by 
$C=\sup_k C_k$.
\end{comment}
\item[(ii)]
$\sup_k C_k<\infty$
and $f|_{\partial I_k}=\mathrm{id}$ for all $k$, 
then $f$ has modulus of continuity $\omega$ with $\omega$-semi-norm
bounded by $C=\frac{\mathrm{diam}(I)}{\omega(\mathrm{diam}(I))}+2\sup_k C_k$.
\end{itemize}
\end{proposition}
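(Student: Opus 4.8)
The plan is to reduce everything to a one-dimensional telescoping estimate along the interval, exploiting two elementary features of $\omega$: its monotonicity, which is assumed, and the fact that, since $\omega$ is concave with $\omega(0)=0$, the ratio $t\mapsto \omega(t)/t$ is non-increasing, equivalently $t\mapsto t/\omega(t)$ is non-decreasing. This last property is the concavity (Jensen) input alluded to in the statement, and it is the only place where concavity, as opposed to mere monotonicity, will be needed; indeed, part (i) will turn out to require monotonicity alone. Since the $I_k$ have pairwise disjoint interiors and cover the interval $I$, I would first order them from left to right and, given distinct points $x<y$ in $I$, distinguish the case where $x,y$ lie in a common $I_k$, handled immediately by $|f(x)-f(y)|\le C_k\,\omega(|x-y|)$, from the case where they are separated by one or more of the intervals.

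For part (i), writing $L=|x-y|$ and inserting the successive shared endpoints $x=t_0<t_1<\cdots$ of the intervals met between $x$ and $y$, the triangle inequality gives $|f(x)-f(y)|\le \sum_i |f(t_{i-1})-f(t_i)|\le \sum_i C_{k_i}\,\omega(|t_i-t_{i-1}|)$, where $k_i$ indexes the interval containing $[t_{i-1},t_i]$. Since each gap satisfies $|t_i-t_{i-1}|\le L$, monotonicity of $\omega$ bounds every $\omega(|t_i-t_{i-1}|)$ by $\omega(L)$, and because distinct gaps lie in distinct intervals the coefficients sum to at most $\sum_k C_k=C$. Hence $|f(x)-f(y)|\le C\,\omega(L)$, which is the claim. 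The one point requiring care here is that the intervals may accumulate, as they do in our application where the $I_n$ cluster at $0$, so that infinitely many of them can separate $x$ from $y$; I would handle this by running the telescoping over finite partial sums up to an accumulation point and passing to the limit using continuity of $f$, noting that any accumulation point is a limit of shared endpoints.

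For part (ii) the coefficients need only be bounded, say by $M=\sup_k C_k$, so summing them is no longer permitted, and instead I would use the hypothesis $f|_{\partial I_k}=\mathrm{id}$ to collapse the middle. Letting $p$ be the right endpoint of the interval containing $x$ and $q$ the left endpoint of the interval containing $y$, so that $x\le p\le q\le y$, the identities $f(p)=p$ and $f(q)=q$, which persist at accumulation points by continuity, yield the three-term bound $|f(x)-f(y)|\le |f(x)-f(p)|+|p-q|+|f(q)-f(y)|$. The two outer terms are each at most $M\,\omega(L)$ by the single-interval estimate together with monotonicity, while the middle term is controlled by the concavity property: since $|p-q|\le \mathrm{diam}(I)$ and $t/\omega(t)$ is non-decreasing, $|p-q|\le \tfrac{\mathrm{diam}(I)}{\omega(\mathrm{diam}(I))}\,\omega(|p-q|)\le \tfrac{\mathrm{diam}(I)}{\omega(\mathrm{diam}(I))}\,\omega(L)$. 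Adding the three contributions gives the stated constant $\tfrac{\mathrm{diam}(I)}{\omega(\mathrm{diam}(I))}+2\sup_k C_k$. I expect the main obstacle to be bookkeeping rather than conceptual: correctly treating the degenerate and accumulation cases, for instance when $x=p$, when $p=q$ for adjacent intervals, or when $x$ or $y$ is itself a cluster point lying in no $I_k$, so that the telescoping in (i) and the collapse of the middle term in (ii) remain valid in the limit.
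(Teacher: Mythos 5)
Your proof is correct, and the overall architecture (telescoping over the shared endpoints for (i); the three-term split $|f(x)-f(p)|+|p-q|+|f(q)-f(y)|$ using $f(p)=p$, $f(q)=q$ for (ii)) matches the paper's. The one genuine difference is in part (i): the paper bounds $\sum_k C_k\,\omega(d(x_k,x_{k+1}))$ by applying Jensen's inequality to the concave function $\omega$ with weights $C_k$, and only then uses monotonicity to pass to $\omega(d(x,y))$; you instead bound each $\omega(|t_i-t_{i-1}|)$ by $\omega(|x-y|)$ directly via monotonicity and sum the coefficients. Both routes land on the same constant $C=\sum_k C_k$, but yours is more elementary and, as you correctly observe, shows that concavity is not actually needed for part (i) --- it enters only in part (ii), through the monotonicity of $t/\omega(t)$, exactly as in the paper. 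You are also more careful than the paper about the case where infinitely many $I_k$ separate $x$ from $y$ (the paper tacitly assumes a finite left-to-right chain); your limiting argument via continuity of $f$ at accumulation points of endpoints is the right fix, and in part (i) the absolute convergence of $\sum_k C_k$ makes the infinite telescoping unproblematic. Part (ii) of your argument is essentially identical to the paper's.
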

%
\begin{proof}
For notational simplicity, assume that the intervals $I_k$ are ordered from left to right.
This does not affect the proof, but simplifies indexing.

\vspace{5 pt}

\noindent
Case (i).
Take $x,y\in I$. 
Assume that $x<y$.
Then there exist integers $m<n$ such that $x\in I_m, y\in I_n$.
Consequently 
\begin{equation}
x_m=x< x_{m+1}<\ldots<x_{n}<y= x_{n+1} \ ,
\end{equation}
where the points
$x_{m+1},x_{m+2}\ldots,x_{n}$ 
denote the left endpoints of the respective intervals 
$I_{m+1}, I_{m+2},\ldots, I_{n}$. 
Let $C_k$ denote the $\omega$-semi-norm of $f|_{I_k}$,
that is
\begin{equation}
C_k
=
\sup_{z\neq w\in I_k} \frac{d(f(z),f(w))}{\omega(d(z,w))} \ .
\end{equation}
It follows that
\begin{align}
d(f(x),f(y))
\leq\sum_{k=m}^n d(f(x_k),f(x_{k+1}))
\leq\sum_{k=m}^n C_k \omega\left(d(x_k,x_{k+1})\right) \ . \label{ineq:1}
\end{align}
However, since $\Lambda$ is concave, Jensen's inequality implies that
\begin{align}
\frac{\sum_{k=m}^nC_k\omega\left(d(x_k,x_{k+1})\right)}
{\sum_{k=m}^n C_k}
&\leq 
\omega\left(\frac{\sum_{k=m}^{n}C_k d(x_k,x_{k+1})}{\sum_{k=m}^n C_k}\right)\\
&\leq 
\omega \left(\max_{m\leq k\leq n} C_k \cdot \frac{\sum_{k=m}^{n} d(x_k,x_{k+1})}{\max_{m\leq k\leq n}C_k}\right) \\
&=
\omega \left(d(x,y)\right) \ . \label{ineq:2}
\end{align}
where, for the last equality we have used that the points $x_k$ are in the real line, placed in increasing order.
Combining inequalities~\eqref{ineq:1} with~\eqref{ineq:2} together with the hypothesis that $\sum_{k=1}^\infty C_k<\infty$ 
gives the result by taking the supremum over all possible $x$ and $y$.

\vspace{5 pt}

\noindent
Case (ii).
Take $x$, $y$ and $x_{m+1},\ldots,x_n$ as before.
Then
\begin{align}
d(f(x),f(y))
&\leq 
d(f(x),f(x_{m+1}))+d(f(x_{m+1}),f(x_{n}))+d(f(x_{n}),f(y))\\
&\leq 
C_m \omega(d(x,x_{m+1}))+d(x_{m+1},x_{n})+C_n \omega(d(x_{n},y))\\
&\leq
\left(2\sup_k C_k +\frac{\mathrm{diam}(I)}{\omega(\mathrm{diam}(I))}\right)\omega(d(x,y)) \ .
\end{align}
As this holds for all $x$ and $y$, it follows that $f$ has modulus of continuity $\omega$, 
with $\omega$-semi-norm bounded by 
$2\sup_k C_k +\mathrm{diam}(I)/\omega(\mathrm{diam}(I))$, as required.
\end{proof}
%
We will also need the following estimates for the map $g_{a,b}$ with respect to the H\"older and Sobolev semi-norms, in the case when $\varphi_a$ is a general concave 
orientation-preserving homeomorphism.
\begin{lemma}[Auxiliary Lemma]
Let $g_{a,b}$ be defined as above, 
where $\varphi_a$ is an arbitrary concave, orientation-preserving homeomorphism, 
so that it possesses an extension to $[0,1+\frac{1}{b}]$, which is also concave and a homeomorphism onto its image.
Then
\begin{equation}\label{ineq:aux-Calpha}
[g_{a,b}]_{C^\alpha,[0,1]}
\leq 
[\varphi_a]_{C^\alpha,[0,1]} 
\cdot b^{\alpha+1}
\int_{[\frac{1}{b},1+\frac{1}{b}]}\left|(\varphi_a^{-1})'(t)\right|^{\alpha}dt
\end{equation}
and
\begin{equation}\label{ineq:aux-W1p}
[g_{a,b}]_{W^{1,p},[0,1]}^p
\leq 
[\varphi_a]_{W^{1,p},[0,1]}
\cdot b^p 
\left(\int_{[0,1]}\left|\frac{t}{g_{1,b}(t)}\right|^{\frac{p^2(1-a)}{p-1}}dt\right)^{1-\frac{1}{p}} \ .
\end{equation}
\end{lemma}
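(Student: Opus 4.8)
\emph{Overview of the plan.} I would prove the two estimates separately, exploiting in both the conjugacy representation $g_{a,b}=\varphi_a\circ g_{1,b}\circ\varphi_a^{-1}$ together with the fact that $g_{1,b}$ is piecewise affine with $|g_{1,b}'|=b$ and sends each of the $b$ subintervals $J_{b,k}$ affinely onto $[0,1]$. Writing $w=\varphi_a^{-1}(x)$, the chain rule gives, wherever $g_{a,b}$ is differentiable,
\[
|g_{a,b}'(x)|=b\,\frac{\varphi_a'(g_{1,b}(w))}{\varphi_a'(w)},
\]
since $(\varphi_a^{-1})'(x)=1/\varphi_a'(w)$. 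This identity underlies the Sobolev bound, while the H\"older bound rests on the same decomposition read off branch by branch.

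\emph{Sobolev estimate.} I would start from $[g_{a,b}]_{W^{1,p}}^p=\int_0^1|g_{a,b}'(x)|^p\,dx$ and substitute $w=\varphi_a^{-1}(x)$, $dx=\varphi_a'(w)\,dw$, obtaining $b^p\int_0^1 \varphi_a'(g_{1,b}(w))^p\,\varphi_a'(w)^{1-p}\,dw$. The crucial step is to apply H\"older's inequality with conjugate exponents $p$ and $p/(p-1)$ to the factorisation of the integrand as $\varphi_a'(w)\cdot\bigl(\varphi_a'(g_{1,b}(w))/\varphi_a'(w)\bigr)^p$: the first factor integrates to exactly $[\varphi_a]_{W^{1,p}}$, and the second yields $\bigl(\int_0^1(\varphi_a'(g_{1,b}(w))/\varphi_a'(w))^{p^2/(p-1)}\,dw\bigr)^{1-1/p}$. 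For the power map $\varphi_a(x)=x^a$ one has $\varphi_a'(g_{1,b}(w))/\varphi_a'(w)=(w/g_{1,b}(w))^{1-a}$, which turns the second factor into the stated integral $\bigl(\int_0^1|t/g_{1,b}(t)|^{p^2(1-a)/(p-1)}\,dt\bigr)^{1-1/p}$, completing \eqref{ineq:aux-W1p}. I expect this half to be essentially bookkeeping once the correct H\"older split is chosen.

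\emph{H\"older estimate.} Here I would work branch by branch: on each $\varphi_a(J_{b,k})$ the map $g_{a,b}$ equals $\varphi_a\circ(\text{affine of slope }b)\circ\varphi_a^{-1}$, so for $x,y$ in a single branch the $C^\alpha$-difference is controlled by $[\varphi_a]_\alpha$ (H\"older continuity of the outer $\varphi_a$), by $b^\alpha$ (the slope of $g_{1,b}$), and by the difference quotient of $\varphi_a^{-1}$, which by concavity of $\varphi_a$ is monotone and hence controlled by $(\varphi_a^{-1})'$. Combining the branches through the Gluing Principle (Proposition~\ref{prop:gluing}(i)) reduces matters to summing the branchwise semi-norms; the remaining task is to convert this sum into the single integral $\int|(\varphi_a^{-1})'(t)|^\alpha\,dt$ with the correct power $b^{\alpha+1}$. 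This is where concavity enters decisively: using that $(\varphi_a^{-1})'$ is monotone one compares each branch's contribution with the integral of $|(\varphi_a^{-1})'|^\alpha$ over the neighbouring branch, which both produces the extra factor $b$ and forces the shift of the limits of integration to $[\tfrac1b,1+\tfrac1b]$; the hypothesis that $\varphi_a$ extends concavely to $[0,1+\tfrac1b]$ is exactly what is needed to control the last (rightmost) branch in this comparison.

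\emph{Main obstacle.} The delicate point is the final assembly in the H\"older case: the branchwise supremum estimates must be combined into the clean integral bound with the precise power of $b$, and this requires a Jensen/Riemann-sum comparison that uses the concavity of $\varphi_a$ in an essential way and is sensitive at the right endpoint (hence the extension hypothesis). By contrast, the Sobolev inequality, once the chain-rule identity and the correct H\"older exponents are in place, follows routinely.
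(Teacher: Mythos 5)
Your proposal is correct and follows essentially the same route as the paper: for the Hölder bound, the branchwise telescoping of the difference quotient, the use of convexity/monotonicity of $(\varphi_a^{-1})'$ to bound the quotient by the derivative at the branch endpoint, the summation via Proposition~\ref{prop:gluing}(i), and the Riemann-sum comparison with the shifted integral over $[\tfrac1b,1+\tfrac1b]$ are exactly the paper's steps; for the Sobolev bound, the chain-rule identity, change of variables and H\"older's inequality with exponents $p$ and $p/(p-1)$ likewise coincide (the paper merely specialises to $\varphi_a=q_a$ before applying H\"older rather than after, and your remark that the stated exponent $p^2(1-a)/(p-1)$ really presupposes the power map is a fair, and slightly more careful, observation).
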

\begin{remark}
As $\varphi_a$ is monotone increasing it follows by Lebesgue's Last Theorem that is is differentiable Lebesgue-almost everywhere 
(see, e.g.~\cite{PughBook}).
Since it is concave it follows from Alexandrov's theorem that is it also twice-differentiable Lebesgue-almost everywhere~\cite[Section 6.4]{EvansGariepy}.
\end{remark}
\begin{proof}
Before starting the proof, 
we introduce the following notation and make the following comments.
For any $t\in [0,1]$ we use the notation
\begin{equation}
t'=\varphi_a^{-1}(t) \ , \qquad
t''=g_{1,b}(\varphi_a^{-1}(t)) \ .
\end{equation}
First consider the H\"older estimate.
Take $k\in \{0,1,\ldots,b-1\}$. 
Let $x,y\in J_{b,k}$ be arbitrary distinct points.
Then, by telescoping the $a$-H\"older difference 
quotient, and observing that $g_{1,b}$ is affine, 
we find that
\begin{align}
\frac{|g_{a,b}(x)-g_{a,b}(y)|}{|x-y|^\alpha}
&=
b^\alpha
\frac{|\varphi_a(x'')-\varphi_a(y'')|}{|x''-y''|^a}
\left(\frac{|\varphi_a^{-1}(x)-\varphi_a^{-1}(y)|}{|x-y|}\right)^\alpha \ .
\end{align}
Observe that $x''$ and $y''$ take values throughout $[0,1]$.
Therefore
\begin{equation}
\frac{|\varphi_a(x'')-\varphi_a(y'')|}{|x''-y''|^\alpha}\leq [\varphi_a]_{C^\alpha,[0,1]} \ .
\end{equation}
Next, trivially 
$x$ and $y$ take values throughout $J_{b,k}=[\frac{k}{b},\frac{k+1}{b}]$.
Therefore, since the function $\varphi_a^{-1}(t)$ is convex and increasing on the positive real line 
(and thus difference quotients on $J_{b,k}$ are maximised by the derivative at the right endpoint $\partial^+ J_{b,k}$),
\begin{equation}
(\varphi_a^{-1})'(\partial^- J_{b,k})
\leq
\frac{|\varphi_a^{-1}(x)-\varphi_a^{-1}(y)|}{|x-y|}
\leq
(\varphi_a^{-1})'(\partial^+ J_{b,k}) \ .
\end{equation}
Consequently, by Proposition~\ref{prop:gluing}(i), 
together with the fact that $(\varphi_a^{-1})'$ is increasing on the positive real line
(so $(\varphi_a^{-1})'$ is minimised on $J_{b,k}$ 
by its value at the left endpoint $\partial^- J_{b,k}=\frac{k}{b}$) we have
\begin{align}
[g_{a,b}]_{C^\alpha,[0,1]}
&\leq 
\sum_{k=0}^{b-1} [g_{a,b}]_{C^\alpha,J_{b,k}}\\
&\leq
[\varphi_a]_{C^\alpha,[0,1]}\cdot
b^{\alpha+1}
\sum_{k=0}^{b-1} \left|(\varphi_a^{-1})'\left(\tfrac{k+1}{b}\right)\right|^\alpha \cdot \tfrac{1}{b}\\
&=
[\varphi_a]_{C^\alpha,[0,1]}\cdot
b^{\alpha+1}
\sum_{k=0}^{b-1} \left|(\varphi_a^{-1})'\left(\partial^- J_{b,k+1}\right)\right|^\alpha \cdot |J_{b,k+1}|\\
&\leq
[\varphi_a]_{C^\alpha,[0,1]}\cdot
b^{\alpha+1} 
\int_{[\frac{1}{b},1+\frac{1}{b}]} \left|(\varphi_a^{-1})'(t)\right|^\alpha\,dt \ .
\end{align}
Next, consider the Sobolev case.
Observe that $g_{a,b}$ is differentiable everywhere except a finite set of points. 
More precisely, $g_{a,b}$ has breaks at exactly the endpoints of $\varphi_a(J_{b,k})$ for $k=0,1,\ldots,b-1$.
By the chain rule, at Lebesgue almost every point $x$ we have
\begin{align}
|g_{a,b}'(x)|
&=|\varphi_a'(g_{1,b}(\varphi_a^{-1}(x))| \, |g_{1,b}'(\varphi_a^{-1}(x))| \, |(\varphi_a^{-1})'(x)|\\
&=b\left(\frac{|g_{1,b}(\varphi_a^{-1}(x))|}{|\varphi_a^{-1}(x)|}\right)^{a-1} \ .
\end{align}
Thus, by the change of variable formula for integrals
\begin{align}
[g_{a,b}]_{W^{1,p},\varphi_a(J_{b,k})}^p
&=
\int_{\varphi_a(J_{b,k})}
|g_{a,b}'(x)|^p 
\,d\mu(x)\\
&=
b^p\int_{\varphi_a(J_{b,k})}
\left|\frac{g_{1,b}(\varphi_a^{-1}(x))}{\varphi_a^{-1}(x)}\right|^{p(a-1)}  
\,d\mu(x)\\
&=
b^p\int_{J_{b,k}}
\left|\frac{g_{1,b}(t)}{t}\right|^{p(a-1)} 
|\varphi_a'(t)|
\,d\mu(t) \ .
\end{align}
Therefore
\begin{equation}
[g_{a,b}]_{W^{1,p},[0,1]}^p
\;=\;
\sum_{k=0}^{b-1}[g_{a,b}]_{W^{1,p},J_{b,k}}^p
\;=\;
b^p\int_{[0,1]}
\left|\frac{t}{g_{1,b}(t)}\right|^{p(1-a)} 
|\varphi_a'(t)|
\,d\mu(t) \ .
\end{equation}
Therefore applying H\"older's inequality gives the result.
\end{proof}
\begin{corollary}\label{cor:q_a-Holder+Sobolev_bounds}
Let $b\geq 2$.
Let $\varphi_a=q_a$ for each $a\in [0,1]$.
Then there exist positive real numbers $C(a,\alpha)$ and $K(a,p)$, depending only upon $a$ and $\alpha$ and upon $a$ and $p$ respectively,
such that we have the following for each $a\in [0,1)$:
\begin{enumerate}
\item[(i)]
$g_{a,b}$ is $C^\alpha$ for all $\alpha\leq a$ and 
\begin{equation*}
[g_{a,b}]_{C^\alpha,[0,1]}\;\leq\; C(a,\alpha) b^{\alpha+1}
\end{equation*}
\item[(ii)]
$g_{a,b}$ is $W^{1,p}$ for all $p< (1-a)^{-1}$ and 
\begin{equation*}
[g_{a,b}]_{W^{1,p},[0,1]}^p\;\leq\; K(a,p)b^{p(1-a)+1} \ .
\end{equation*}
\end{enumerate}
\end{corollary}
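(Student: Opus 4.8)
The plan is to specialise both inequalities of the Auxiliary Lemma to the choice $\varphi_a=q_a$ and to estimate the resulting one-variable integrals. First note that $q_a(x)=x^a$ is concave and orientation-preserving on $[0,1]$ and extends to a concave homeomorphism of $[0,1+\tfrac1b]$ onto its image, so the hypotheses of the Auxiliary Lemma are satisfied; moreover $q_a^{-1}(t)=t^{1/a}$, so that $(q_a^{-1})'(t)=\tfrac1a\,t^{(1-a)/a}$ and $q_a'(t)=a\,t^{a-1}$.

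For part (i), I would first record that $[q_a]_{C^\alpha,[0,1]}$ is finite precisely when $\alpha\le a$ (the standard computation for the power function, and the only place the restriction $\alpha\le a$ enters), with a bound $c(a,\alpha)$ depending only on $a$ and $\alpha$. Substituting into \eqref{ineq:aux-Calpha} then reduces the claim to bounding $\int_{[1/b,\,1+1/b]}|(q_a^{-1})'(t)|^{\alpha}\,dt$ uniformly in $b$. Since $|(q_a^{-1})'(t)|^{\alpha}=a^{-\alpha}t^{\alpha(1-a)/a}$ carries a nonnegative power of $t$, the integrand is increasing and bounded on $[1/b,1+1/b]\subseteq[0,\tfrac32]$ (using $b\ge 2$), so this integral is at most $a^{-\alpha}(3/2)^{\alpha(1-a)/a}$. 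Combining this constant with the explicit factor $b^{\alpha+1}$ appearing in \eqref{ineq:aux-Calpha} gives the bound $[g_{a,b}]_{C^\alpha,[0,1]}\le C(a,\alpha)\,b^{\alpha+1}$.

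For part (ii), the cleanest route is to work from the equality established in the course of proving the Auxiliary Lemma, namely $[g_{a,b}]_{W^{1,p},[0,1]}^p=b^p\int_{[0,1]}|t/g_{1,b}(t)|^{p(1-a)}\,|\varphi_a'(t)|\,d\mu(t)$, rather than from the H\"older-reduced form \eqref{ineq:aux-W1p}. Substituting $\varphi_a'=q_a'=a\,t^{a-1}$ turns the right-hand side into $a\,b^p\int_0^1 t^{(p-1)(1-a)}\,g_{1,b}(t)^{-\sigma}\,d\mu(t)$ with $\sigma=p(1-a)$; note $(p-1)(1-a)\ge 0$, so the factor $t^{(p-1)(1-a)}$ is harmless. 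I would then decompose $[0,1]$ into the $b$ affine branches $J_{b,k}$ of $g_{1,b}$ and, on each, perform the change of variables $u=g_{1,b}(t)$ (so $d\mu(t)=du/b$ and $t=(u+k)/b$ or $(k+1-u)/b$); this reduces every branch integral, up to the harmless factor, to a multiple of the elementary integral $\int_0^1 u^{-\sigma}\,du$, which is finite exactly when $\sigma<1$, i.e. when $p<(1-a)^{-1}$. This is precisely the claimed range, and it also pinpoints where membership in $W^{1,p}$ breaks down. Summing over the $b$ branches and collecting the Jacobian factors $1/b$ with the explicit $b^p$ then yields a bound of the stated polynomial form $K(a,p)\,b^{p(1-a)+1}$.

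The main obstacle is the Sobolev estimate in (ii). Unlike the H\"older integral, the integrand $g_{1,b}(t)^{-\sigma}$ has singularities at the interior zeros of $g_{1,b}$, and the whole argument hinges on the branchwise change of variables taming them, which forces the condition $\sigma<1$ and hence the constraint $p<(1-a)^{-1}$; one should also note that the H\"older-reduced inequality \eqref{ineq:aux-W1p} is too lossy here, since its integral can diverge throughout this range, so it is important to estimate directly from the intermediate identity. The remaining delicate point is the careful accounting of the powers of $b$ generated by the slope $b$ of $g_{1,b}$, by the $b$ branches, and by the Jacobian of the change of variables, which is what produces the exponent in the final bound. By contrast, part (i) is essentially immediate once the uniform boundedness of its integral is observed.
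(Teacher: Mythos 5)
Part (i) of your proposal is correct and is essentially the paper's argument: the restriction $\alpha\le a$ enters only through the finiteness of $[q_a]_{C^\alpha,[0,1]}$, and the integral in \eqref{ineq:aux-Calpha} is bounded uniformly in $b$ because the integrand $a^{-\alpha}t^{\alpha(1-a)/a}$ is bounded on $[1/b,1+1/b]\subset[0,\tfrac32]$. (The paper evaluates that integral in closed form, but your sup-bound does the same job.)

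In part (ii) your method is sound --- and in fact cleaner than the paper's branch-by-branch $L^1$/concavity estimates --- and it correctly isolates the convergence condition $\sigma=p(1-a)<1$. But your final sentence is a non sequitur, and this is a genuine gap. Carry your own computation to the end: bounding $t^{(p-1)(1-a)}\le 1$ and changing variables on each of the $b$ affine branches gives $\int_0^1 g_{1,b}(t)^{-\sigma}\,dt=\sum_{k}\tfrac{1}{b(1-\sigma)}=\tfrac{1}{1-\sigma}$, hence $[g_{a,b}]_{W^{1,p},[0,1]}^p\le \tfrac{a}{1-p(1-a)}\,b^{p}$ --- exponent $p$, not $p(1-a)+1$. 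Since $p>p(1-a)+1$ precisely when $pa>1$ (which happens for $a>\tfrac12$ and $p\in(1/a,(1-a)^{-1})$), your bound does not imply the stated one in that range, and no more careful bookkeeping of the powers of $b$ will recover it: restricting the integral to the branches with $t\ge\tfrac12$ yields the matching lower bound $[g_{a,b}]_{W^{1,p},[0,1]}^p\ge c(a,p)\,b^p$, so the exponent $p(1-a)+1$ is not attainable. (For what it is worth, the paper's own derivation reaches that exponent via a slip in its per-branch change of variables --- the factor written there as $b^{1-p(1-a)}$ should be $b^{-1-p(1-a)}$ --- and a careful rerun of its branchwise sums also produces order $b^p$.) The bound you actually prove, $[g_{a,b}]_{W^{1,p},[0,1]}^p\le K(a,p)\,b^p$, is all that is needed downstream, since $\sum_n 2^{-n}(2n+1)^p<\infty$ in the proof of Theorem~\ref{mainthm:HolderSobolev} 3(b); but as a proof of the Corollary as literally stated, the last step of your argument does not go through and should be flagged rather than asserted.
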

\begin{proof}
\noindent
(i)
When $\varphi_a=q_a$ we find that $q_{a,b}$ is locally of the form 
$|x|^{a}$ about any $g_{a,b}$-preimage of $0$ except $0$ itself. 
(Observe that, as $b\geq 2$, such a preimage exists.)
Therefore $g_{a,b}$ cannot be $C^\alpha$ for any $\alpha>a$.
For $\alpha\leq a$, by inequality~\eqref{ineq:aux-Calpha},
\begin{equation}
[g_{a,b}]_{C^\alpha,[0,1]}
\leq
[\varphi_a]_{C^\alpha,[0,1]}
\frac{b^{\alpha+1} a^{1-\alpha}}{\alpha(1-a)+a}
\left((1+\tfrac{1}{b})^{\alpha(\frac{1}{a}-1)+1}-(\tfrac{1}{b})^{\alpha(\frac{1}{a}-1)+1}\right) 
\end{equation}
and thus there exists an extended positive real number $C(a,\alpha)$, 
depending upon $a$ and $\alpha$ only, such that 
$C(a,\alpha)$ 
is finite for $0\leq \alpha\leq a$,
and is infinite for $a<\alpha\leq 1$, and for which
\begin{equation}
[g_{a,b}]_{C^\alpha,[0,1]}
\leq
C(a,\alpha) b^{\alpha+1} \ .
\end{equation}

\noindent
(ii)
First, in the special case when $k=0$,
\begin{equation}
[g_{a,b}]_{W^{1,p},\varphi_a(J_{b,0})}^p
=b^p\int_{0}^{\frac{1}{b}}\left|\frac{1}{b}\right|^{p(1-a)}\varphi_a'(t)\,dt
=b^{a(p-1)} \ .
\end{equation}
In the general case, 
applying the standard $L^1$-estimate to inequality~\eqref{ineq:aux-W1p} gives 
\begin{equation}\label{ineq:W1p-L1ineq}
\begin{gathered}
b^p
\min_{J_{b,k}} |\varphi_a'|
\int_{J_{b,k}}
\left|\frac{t}{g_{1,b}(t)}\right|^{p(1-a)} 
\,dt 
\;\leq\;
[g_{a,b}]_{W^{1,p},\varphi_a(J_{b,k})}^p\\
\;\leq\; 
b^p
\max_{J_{b,k}} |\varphi_a'|
\int_{J_{b,k}}
\left|\frac{t}{g_{1,b}(t)}\right|^{p(1-a)} 
\,dt \ .
\end{gathered}
\end{equation}
But
\begin{align}
\int_{J_{b,k}}
\left|\frac{t}{g_{1,b}(t)}\right|^{p(1-a)} 
\,dt
&=
\left\{\begin{array}{ll}
\displaystyle{\int}_{J_{b,k}}
\left|\tfrac{t}{bt-k}\right|^{p(1-a)} 
\,dt
& \ \ k \ \mbox{even}\\
\displaystyle{\int}_{J_{b,k}}
\left|\tfrac{t}{k+1-bt}\right|^{p(1-a)} 
\,dt
& \ \ k \ \mbox{odd}
\end{array}\right. \ .
\end{align}
Making an appropriate change of variables this can also be written in the form
\begin{align}
\int_{J_{b,k}}
\left|\frac{t}{g_{1,b}(t)}\right|^{p(1-a)} 
\,dt
&=
\left\{\begin{array}{ll}
\displaystyle{\int}_{[0,1]}
\left|1+\tfrac{k}{u}\right|^{p(1-a)} 
b^{1-p(1-a)}\,du
& \ k \ \mbox{even}\\
\displaystyle{\int}_{[0,1]}
\left|1-\tfrac{k+1}{u}\right|^{p(1-a)} 
b^{1-p(1-a)}\,du
& \ k \ \mbox{odd}
\end{array}\right. \ .
\end{align}
Combining with~\eqref{ineq:W1p-L1ineq}, 
this shows that $g_{a,b}$ is not $W^{1,p}$ for $p\geq (1-a)^{-1}$.
For $p<(1-a)^{-1}$, since the power function 
$t^{\sigma}$, where $\sigma=p(1-a)$, 
is concave we find that
\begin{align}
\int_{[0,1]}
\left(1+\frac{k}{u}\right)^{\sigma}\,du
&\leq 
k^{\sigma}
\int_{[0,1]}
u^{-\sigma}\,du
+
\sigma k^{\sigma-1}
\int_{[0,1]}u^{1-\sigma}\,du\\
&=
\frac{k^\sigma}{1-\sigma}+\frac{\sigma k^{\sigma-1}}{2-\sigma} \ ,
\end{align}
while
\begin{equation}
\int_{[0,1]}\left(\frac{k+1}{u}-1\right)^\sigma\,du
\leq
\int_{[0,1]}\left(\frac{k+1}{u}\right)^\sigma\,du
=\frac{(k+1)^{\sigma}}{1-\sigma} \ .
\end{equation}
Therefore, by~\eqref{ineq:W1p-L1ineq} we arrive at the following inequalities
\begin{equation}
[g_{a,b}]_{W^{1,p},\varphi_a(J_{b,k})}^p
\leq
\left\{\begin{array}{ll}
a\left(\frac{b}{k}\right)^{1-a} k^{p(1-a)}\left(\frac{1}{1-p(1-a)}+\frac{p(1-a)}{k(2-p(1-a))}\right) & \ \ k \ \mbox{even} \\
a\left(\frac{b}{k}\right)^{1-a} (k+1)^{p(1-a)}\frac{1}{1-p(1-a)} & \ \ k \ \mbox{odd}
\end{array}\right. \ .
\end{equation}
Consequently, there exist positive real numbers 
$K_0(a,p)$ and $K(a,p)$, depending upon $a$ and $p$ only, such that 
\begin{align}
[g_{a,b}]_{W^{1,p},[0,1]}^p
&=\sum_{k=0}^{b-1}[g_{a,b}]_{W^{1,p},\varphi_a(J_{k,b})}^p\\
&\leq K_0(a,p)b^{1-a} \sum_{k=0}^{b-1} k^{(p-1)(1-a)}\\
&\leq K(a,p)b^{p(1-a)+1} \ .
\end{align}
This completes the proof.
\end{proof}
\begin{proof}[Proof of Theorem~\ref{mainthm:HolderSobolev} 3(a)--3(c)]
(a)
Since the function $g_{a,b}$ is not $C^\alpha$ for any $\alpha>a$, and as the $\alpha$-H\"older condition is preserved under affine rescaling, it follows that
$f_a$ is also not $C^\alpha$ for any $\alpha>a$. 
Let us show that $f_a$ lies in $C^a$. 
By the H\"older rescaling principle~\cite[Proposition 2.2]{dFHazT2}
\begin{align}
[f_a]_{C^a,I_n}
&\leq 
[A_{I_n}^{-1}]_{\mathrm{Lip}} \; [g_{a,2n+1}]_{C^a, [0,1]} \; [A_{I_n}]_{\mathrm{Lip}}^a\\
&=
|I_n|^{1-a} \; [g_{a,2n+1}]_{C^a, [0,1]}\\
&\leq 2^{-n(1-a)} C(a,a) (2n+1)^{1+a} \ .
\end{align}
By the Proposition~\ref{prop:gluing}(ii), with $\omega(x)=x^a$, since $f_a$ fixed the endpoints of $I_n$ for each $n$, we find that
\begin{align}
[f_a]_{C^a, [0,1]}
\;\leq\; 
\sup_n \; [f_a]_{C^a, I_n}
\;\leq\; 
C(a,a)\sup_n 2^{-n(1-a)} (2n+1)^{1+a}
\;<\;
\infty
\end{align}
where, for the second inequality, we used Corollary~\ref{cor:q_a-Holder+Sobolev_bounds}(i).

\vspace{5pt}

\noindent
(b)
Since $f_a|_{I_n}=A_{I_n}^{-1}\circ g_{a,2n+1}\circ A_{I_n}$ 
an affine rescaling of a map differentiable Lebesgue-almost everywhere we find
that, for Lebesgue-almost every $x\in I_n$,
\begin{equation}
|f_a(x)|
=|g_{a,2n+1}'(A_{I_n}(x))| \ .
\end{equation}
This, together with the change of variables formula for integrals and the observation that 
$|A_{I_n}'|= |I_n|^{-1}$, 
gives
\begin{align}
[f_a]_{W^{1,p},I_n}^p
&=\int_{I_n} |f_a'(x)|^p\,dx\\
&=|I_n|\int_{I_n}|g_{a,2n+1}'(A_{I_n}(x))|^p|A_{I_n}'(x)|\,dx\\
&=|I_n|\int_{A_{I_n}(I_n)}|g_{a,2n+1}'(u)|^p\,du\\
&\leq |I_n| \; [g_{a,2n+1}]_{W^{1,p}, [0,1]}^p
\end{align}
This, together with Corollary~\ref{cor:q_a-Holder+Sobolev_bounds}(ii), implies that
\begin{align}
[f_a]_{W^{1,p},[0,1]}^p
&=\sum_{n=1}^\infty [f_a]_{W^{1,p}, I_n}^p\\
&\leq \sum_{n=1}^{\infty} |I_n| \; [g_{a,2n+1}]_{W^{1,p}, [0,1]}^p\\
&\leq \sum_{n=1}^{\infty} 2^{-n} K(a,p) (2n+1)^{p(1-a)+1}
\end{align}
This last series is convergent.
Thus $f_a$ is $W^{1,p}$ for $1\leq p<(1-a)^{-1}$, as required.

\vspace{5pt}

\noindent
(c) 
Observe that $\varphi_a$ and $\varphi_a^{-1}$ are $C^a$, but they are not $C^\alpha$ for any $\alpha>a$. 
By the H\"older rescaling principle~\cite[Proposition 2.2]{dFHazT2}
\begin{equation}
[\psi_a]_{C^\alpha, I_n}
\;\leq\; 
[A_{I_n}^{-1}]_\mathrm{Lip} [\varphi_a]_{C^\alpha, [0,1]} [A_{I_n}]_\mathrm{Lip}^\alpha
\;=\;
|I_n|^{1-\alpha} [\varphi_a]_{C^\alpha, [0,1]}
\end{equation}
and a similar estimate holds for $[\psi_a^{-1}]_{C^\alpha,I_n}$.
Therefore, 
observing that for each $a\in (0,1]$, we have
$\psi_a|\partial I_n\equiv\mathrm{id}$, 
it follows that we may apply Proposition~\ref{prop:gluing}(ii). 
Hence, for all $\alpha\leq a$, $\psi_a$ is a bi-$\alpha$-H\"older homeomorphism.
\begin{comment}
\begin{equation}
[\psi_a]_{C^\alpha, [0,1]}
\leq 1+2\sup_k |I_k|^{1-\alpha} [\varphi_a]_{C^\alpha, [0,1]}
= 1+2^\alpha [\varphi_a]_{C^\alpha, [0,1]}
\end{equation}
\end{comment}
Since $f_a=\psi_a^{-1}\circ f_1\circ \psi_a$ and $f_1$ preserves Lebesgue measure $\mu$, 
it follows that the pullback $\psi_a^*\mu$ is an invariant measure for $f_a$.
As the functions $\varphi_a$ are absolutely continuous with respect to Lebesgue, it follows that $\psi_a$ is also absolutely continuous.
\end{proof}
%
\begin{comment}
\begin{remark}
In both examples there are infinitely many measures of maximal entropy.
In the first example Lebesgue measure is one of these measures and all other such measures are absolutely continuous with respect to Lebesgue.
In the second, all measures maximising entropy are singular.
\end{remark}
\end{comment}
For $a\neq 1$, 
the above analysis can also be applied to the construction when restricting to the union of the intervals 
$I_n, I_{n+1},\ldots$, just as in the case of Corollary~\ref{cor:a=1_accum_on_id}.
Thus, analogously to that corollary, we also get the following result.
\begin{corollary}\label{cor:aneq1_accum_on_id}
For each $a\in (0,1)$, 
there exists a sequence $f_{a,n}\in C^0([0,1],[0,1])$ satisfying properties 3(a)-3(c) in Theorem~\ref{mainthm:HolderSobolev} above and
with the additional property that $\lim_{n\to\infty} f_n= \mathrm{id}$ where convergence is taken
\begin{itemize}
\item
in the $C^\alpha$-topology for any $\alpha\in(0,a)$,
\item
in the $W^{1,p}$-topology for any $p\in\left[1,(1-a)^{-1}\right)$.
\end{itemize}

\end{corollary}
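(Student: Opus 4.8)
\textbf{Proof plan for Corollary~\ref{cor:aneq1_accum_on_id}.}
The plan is to mirror the proof of Corollary~\ref{cor:a=1_accum_on_id}, replacing the $a=1$ ingredients by their $a<1$ analogues. For each integer $N\geq 1$ I would define $f_{a,N}$ by carrying out the construction of $f_a$ only on the tail $\bigcup_{n\geq N}I_n=(0,2^{-N+1}]$ and setting it equal to the identity on the rest; explicitly, $f_{a,N}|_{I_n}=A_{I_n}^{-1}\circ g_{a,2n+1}\circ A_{I_n}$ for $n\geq N$ and $f_{a,N}|_{[2^{-N+1},1]}=\mathrm{id}$. Since each $g_{a,2n+1}$ fixes the endpoints of $[0,1]$, the map $f_{a,N}$ is continuous, fixes the endpoints of every $I_n$, and preserves each $I_n$; these are the members (indexed by $N$) of the asserted sequence, so that the claimed limit is $\lim_{N\to\infty}f_{a,N}=\mathrm{id}$.

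First I would check that each $f_{a,N}$ inherits properties 3(a)--(c). This is immediate from the arguments already given for $f_a$: the estimates in the proof of Theorem~\ref{mainthm:HolderSobolev} 3(a)--(b), together with Corollary~\ref{cor:q_a-Holder+Sobolev_bounds}, only ever sum or take suprema over the intervals $I_n$, and restricting the index range to $n\geq N$ merely discards finitely many finite terms; on the complementary interval $[2^{-N+1},1]$ the map is the identity, which is Lipschitz and hence lies in every class under consideration. Part 3(c) transfers via the same conjugacy: writing $\psi_{a,N}$ for the homeomorphism equal to $\mathrm{id}$ on $[2^{-N+1},1]$ and to $A_{I_n}^{-1}\circ\varphi_a\circ A_{I_n}$ on $I_n$ for $n\geq N$, one has $f_{a,N}=\psi_{a,N}^{-1}\circ f_{1,N}\circ\psi_{a,N}$ with $f_{1,N}$ (the analogous $a=1$ tail construction) preserving Lebesgue measure $\mu$, so that $\psi_{a,N}^{*}\mu$ is an invariant, absolutely continuous measure of maximal entropy, exactly as before.

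For the convergence, the $C^0$ case is trivial: $f_{a,N}-\mathrm{id}$ is supported in $(0,2^{-N+1}]$ and, since $f_{a,N}$ preserves each $I_n$, one has $|f_{a,N}(x)-x|\leq|I_n|=2^{-n}\leq 2^{-N}$ for $x\in I_n$, whence $\|f_{a,N}-\mathrm{id}\|_{C^0}\leq 2^{-N}\to 0$. For the $C^\alpha$-seminorm with $\alpha<a$ I would apply the Gluing Principle (Proposition~\ref{prop:gluing}(i)) to the real-valued function $f_{a,N}-\mathrm{id}$ relative to the covering $\{[2^{-N+1},1]\}\cup\{I_n:n\geq N\}$ --- its proof uses only continuity and the per-piece moduli, so it applies even though $f_{a,N}-\mathrm{id}$ is not literally a self-map. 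On $[2^{-N+1},1]$ the function vanishes, while on $I_n$, by the H\"older rescaling principle~\cite[Proposition 2.2]{dFHazT2} and Corollary~\ref{cor:q_a-Holder+Sobolev_bounds}(i) (whose constant $C(a,\alpha)$ is finite precisely because $\alpha\leq a$),
\[
[f_{a,N}-\mathrm{id}]_{C^\alpha,I_n}\leq[f_a]_{C^\alpha,I_n}+[\mathrm{id}]_{C^\alpha,I_n}\leq C(a,\alpha)\,2^{-n(1-\alpha)}(2n+1)^{\alpha+1}+2^{-n(1-\alpha)}.
\]
Since $\alpha<1$, the geometric factor dominates the polynomial, the full series $\sum_{n\geq 1}$ converges, and Proposition~\ref{prop:gluing}(i) yields $[f_{a,N}-\mathrm{id}]_{C^\alpha,[0,1]}\leq\sum_{n\geq N}[f_{a,N}-\mathrm{id}]_{C^\alpha,I_n}$, a tail of a convergent series, which tends to $0$ as $N\to\infty$. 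For the $W^{1,p}$-seminorm with $p<(1-a)^{-1}$ I would instead use that this seminorm is additive over the partition and that $D(f_{a,N}-\mathrm{id})=0$ on $[2^{-N+1},1]$; on each $I_n$ one bounds $\int_{I_n}|f_{a,N}'-1|^p\leq 2^{p-1}\bigl([f_a]_{W^{1,p},I_n}^p+|I_n|\bigr)\leq 2^{p-1}\bigl(K(a,p)\,2^{-n}(2n+1)^{p(1-a)+1}+2^{-n}\bigr)$ by the estimate from the proof of 3(b) and Corollary~\ref{cor:q_a-Holder+Sobolev_bounds}(ii), and summing over $n\geq N$ again gives a vanishing tail.

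The one genuinely delicate point is the $C^\alpha$ estimate: one must pass from the per-interval H\"older bounds to a \emph{global} bound on $[0,1]$ across the infinitely many intervals accumulating at $0$, which is exactly what the Gluing Principle supplies. The crucial feature is that the full series $\sum_n[f_{a,N}-\mathrm{id}]_{C^\alpha,I_n}$ converges, so that its tail --- and hence the global seminorm of $f_{a,N}-\mathrm{id}$ --- vanishes in the limit; verifying this summability, and that it localises cleanly to $n\geq N$, is where care is needed. Everything else is a direct transcription of the $a=1$ case treated in Corollary~\ref{cor:a=1_accum_on_id}.
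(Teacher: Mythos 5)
Your proposal is correct and follows exactly the route the paper intends: the paper gives no written proof of this corollary, merely noting that the analysis of Theorem~\ref{mainthm:HolderSobolev} 3(a)--3(c) applies to the tail construction on $\bigcup_{n\geq N}I_n$, and your restriction-to-tail definition of $f_{a,N}$, transfer of properties 3(a)--(c), and identification of the $C^\alpha$- and $W^{1,p}$-seminorms of $f_{a,N}-\mathrm{id}$ as tails of the convergent series already established via Corollary~\ref{cor:q_a-Holder+Sobolev_bounds} is precisely the intended elaboration. Your use of Proposition~\ref{prop:gluing}(i) on the difference $f_{a,N}-\mathrm{id}$ (rather than part (ii), whose bound would not vanish as $N\to\infty$) is the right choice for the convergence claim.
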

%
%
\section{Examples in the little Zygmund class.}\label{sect:Zygmund}
As was previously remarked, the function $f_1$ constructed in 
Theorem~\ref{mainthm:HolderSobolev} above
has modulus of continuity $t\log (\frac{1}{t})$, 
and hence is $\alpha$-H\"older for every $\alpha\in [0,1)$, but it is not Lipschitz.
Further, it does not satisfy either the big or little Zygmund conditions.
Recall that a continuous function $f$ of the interval $[0,1]$ satisfies the {\it big Zygmund condition} if, for all $x$ in $(0,1)$,
\begin{equation}
\left|f(x+t)+f(x-t)-2f(x)\right|=O(t)
\end{equation}
and the {\it little Zygmund condition} if, for all $x$ in $(0,1)$,
\begin{equation}
\left|f(x+t)+f(x-t)-2f(x)\right|=o(t) \ .
\end{equation}
Observe that this condition only makes sense at interior points.
We will denote the sets of functions satisfying the big and little Zygmund conditions respectively by
$C^Z([0,1],\mathbb{R})$ and $C^{z}([0,1],\mathbb{R})$.
Observe that these are both linear spaces.
Define the {\it Zygmund semi-norm} by
\begin{equation}
[f]_{Z,[0,1]}=\sup_{t: x\pm t\in [0,1]}\sup_{x\in[0,1]}\frac{|f(x+t)+f(x-t)-2f(x)|}{|t|} \ .
\end{equation}
Then
\begin{equation}
\|f\|_{C^Z([0,1],\mathbb{R})}=\|f\|_{C^0([0,1],\mathbb{R})}+[f]_{Z,[0,1]}
\end{equation}
\vspace{0pt}

\noindent
defines a complete norm on $C^Z([0,1],\mathbb{R})$, which we call the {\it Zygmund norm}.
With this topology, $C^z([0,1],\mathbb{R})$ is a closed subspace of $C^Z([0,1],\mathbb{R})$.

The Zygmund classes strictly (in fact compactly) contain the Lipschitz class and are contained in the $\alpha$-H\"older class for each $\alpha\in[0,1)$.
Moreover, by a theorem of Zygmund~\cite{ZygmundBook}, functions in the big Zygmund class have modulus of continuity $t\log (\frac{1}{t})$.
(See~\cite{ZygmundBook} for more on these classes.) 
The reason that the example $f$ given above is not in either Zygmund class is that, 
at a turning point $x$ of a $k$-branched horseshoe, for all $t$ sufficiently small, 
\begin{equation}
\left|(f(x+t)-f(x))-(f(x)-f(x-t))\right|\geq 2kt \ .
\end{equation} 
One might think that, by replacing piecewise-affine 
with smooth horseshoes and possibly changing the 
lengths of the intervals, one may be able to improve the regularity, say to the big Zygmund class.
However, this is not possible as, for {\it any} 
$k$-branched horseshoe, there is {\it some} turning point $x$ and some $t$ so that
\begin{equation}
\left|(f(x+t)-f(x))-(f(x)-f(x-t))\right|\geq kt \ .
\end{equation}
\begin{comment}
However, the following Theorem shows that there are examples in the little Zygmund class with infinite topological entropy.
%
\begin{theorem}\label{thm:interval_zyg}
There exists $f\in C^0([0,1],[0,1])$ with the following properties
\begin{enumerate}
\item
$f$ satisfies the little Zygmund condition
\item
$h_\mathrm{top}(f)=+\infty$
\end{enumerate}
\end{theorem}
\end{comment}
%
\begin{remark}
It will be clear from the construction below that piecewise-affine 
(with countably many pieces, as in the preceding example) can also be constructed but, 
necessarily, these can only lie in the big Zygmund class. 
\end{remark}
\begin{figure}[htp]
\begin{center}
\psfrag{0}[][]{ ${\scriptstyle 0}$}
\psfrag{1}[][]{ ${\scriptstyle 1}$}
\psfrag{x}[][][1]{$x$}
\psfrag{f}[][][1]{$\ \ f(x)$}
\psfrag{J0}[][][1]{$J_0$}
\psfrag{J1}[][][1]{$J_1$}
\psfrag{J2}[][][1]{$J_2$}
\psfrag{I1}[][][1]{$I_1$}
\psfrag{I2}[][][1]{$I_2$}
\psfrag{I3}[][][1]{$I_3$}
\psfrag{J0'}[][][1]{$J_0'$}
\psfrag{J1'}[][][1]{$J_1'$}
\psfrag{J2'}[][][1]{$J_2'$}
\psfrag{I1'}[][][1]{$I_1'$}
\psfrag{I2'}[][][1]{$I_2'$}
\psfrag{I3'}[][][1]{$I_3'$}

\includegraphics[width=4.5in]{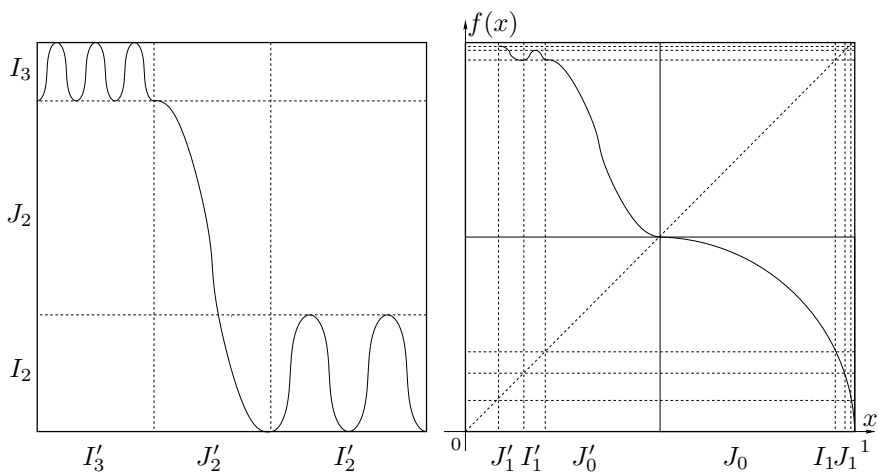}
\end{center}
\caption[slope]{\label{figendo3} 
The graph of the little Zygmund interval 
endomorphism $f$ on $[0,1]$ with infinite 
topological entropy (right) 
and an affine rescaling of the graph on 
the interval $I_3'\cup J_3'\cup I_2'$ (left).}
\end{figure}
Before starting the construction proper, 
we introduce the following notation.
Given oriented closed intervals $I$ and $J$ denote by $A_{I,J}$ the unique orientation-preserving affine bijection from $J$ to $I$,
and $A_{I,J}^{-}$ the unique orientation-reversing affine bijection from $J$ to $I$.
Given a positive integer $n$, let 
$g_n\colon [0,1]\to [0,\frac{1}{n}]$ 
be defined by
\begin{equation}
g_n(x)=\frac{1}{2n}\left(1-\cos(2\pi n x)\right) \ .
\end{equation}
Observe that, for each $n$,
$g_n$ maps the interval $[0,1]$ onto $[0,\frac{1}{n}]$ 
in a $2n$-to-$1$ manner.
Also, $g_n$ is Lipschitz with Lipschitz 
constant $\pi$, and is differentiable 
with vanishing derivative at the endpoints. 
It also has the following important property.

\vspace{5pt}

\begin{quote}
{\sl Key Property:}
Let $\phi\colon [0,\frac{1}{n}]\to [0,1]$ be any homeomorphism.
Then the composition $\phi\circ g_n\colon [0,1]\to [0,1]$ is a $2n$-branched horseshoe. 
\end{quote}

\vspace{5pt}

\noindent
Now we get to the construction proper. 
Take any continuous map $f\colon [\frac{1}{2},1]\to [0,\frac{1}{2}]$ satisfying the following properties
\begin{enumerate}
\item
$f$ is an orientation-reversing homeomorphism which is little Zygmund, with little Zygmund inverse, 
\item
$f'(\frac{1}{2})$ exists and is zero,
\item 
$f'(1)$ exists (in the extended sense) and equals $-\infty$,
\item
There exist pairwise disjoint open subintervals $I_1, I_2,\ldots\subset[\frac{1}{2},1]$, ordered from left to right, 
which converge to $\{1\}$ and have the property that, for all $n\in\mathbb{N}$,
\begin{equation}
\frac{|f(I_{n})|}{|I_{n}|}=n \ .
\end{equation}
\item
Let $J_0,J_1,J_2,\ldots\subset [0,1]$ denote 
the pairwise disjoint closed intervals which 
are connected components of 
$[\frac{1}{2},1]\setminus \bigcup_{n=1}^\infty I_n$, 
again ordered from left to right.
Then, for all $n\in\mathbb{N}$,
\begin{equation}\label{property:J-difference-quotients}
\frac{|I_n|+|J_n|}{|f(J_n)|}\leq K \ .
\end{equation}
\end{enumerate}
\begin{remark}
Observe that such functions are easy to construct.
For example, if we start with the function 
$\phi\colon [0,\tfrac{1}{e}]\to [0,\tfrac{1}{e}]$ 
given by $\phi(x)=x\log\left(\frac{1}{x}\right)$,
then $\phi$ has 
infinite derivative at zero, 
a critical point at $\frac{1}{e}$, and 
satisfies the Zygmund condition.
Moreover, so do any affine rescalings.
Thus, setting 
\begin{equation}
f
=
A_{\left[0,\tfrac{1}{e}\right],\left[0,\tfrac{1}{2}\right]}\circ \phi \circ A_{\left[\tfrac{1}{2},1\right],\left[0,\tfrac{1}{e}\right]}^{-} \ ,
\end{equation} 
we find that $f$ satisfies properties (1)--(3).  
We can choose intervals $I_n$ to be a small neighbourhood about the unique point $x_n$ where $f'(x_n)=-n$, for each $n$, so that (4) is satisfied.
By taking $|I_n|$ sufficiently small for each $n$, this will ensure that 
$\frac{|I_n|+|J_n|}{|f(J_n)|}\leq 2\frac{|J_n|}{|f(J_n)|}$.
As $|f'(y)|$ tends to infinity as $y$ tends to $1$, the property (5) holds.  
\end{remark}
Next, extend $f$ to a continuous map from $[0,1]$ to $[0,1]$ as follows.
First, however, we need the following notation.
Recall that $\nu(x)=1-x$ is the unique affine orientation-reversing map of $[0,1]$.
Define $\sigma\colon [0,1]\to [0,1]$ by
\begin{equation}
\sigma(x)=\tfrac{1}{2}(1-\cos(\pi x)) \ .
\end{equation}
(More generally, $\sigma$ can be any smooth orientation-preserving homeomorphism of $[0,1]$ with bounded derivative and with critical points at $0$ and $1$.) 

For each $n$, define $I_n'=f(I_n)$ and $J_n'=f(I_n)$.
Observe that, since $f|_{[\frac{1}{2},1]}$ is an orientation-reversing homeomorphism, 
the $I_n'$ are decreasing sequence of pairwise disjoint subintervals, converging to $\{0\}$, and are interlaced by the $J_n'$.
We define $f$ on each $I_n'$ and $J_n'$ separately, ensuring that the resulting map is continuous.
First, define $f$ on $I_n'$, for each integer $n\geq 1$, by
\begin{equation}
f(x)=A_{[0,\frac{1}{n}],I_n} \circ g_{n}\circ A_{I_n',[0,1]}(x) \qquad x\in I_n', \ n\geq 1 \ .
\end{equation}
Observe that $f$ is differentiable on each $I_n'$ and has vanishing derivative at the endpoints.
Define $f$ on $\bigcup_{n=0}^{\infty} J_n'$ so that $f$ is differentiable on each $J_n'$ with vanishing derivatives at the endpoints. 
Namely, we set
\begin{equation}
f(x)
=
\left\{\begin{array}{ll}
A_{[0,1],I_n\cup J_n}\circ\nu\circ\sigma\circ A_{J_n',[0,1]}(x)  & \ x\in J_n', \ n\geq 1\\
A_{[0,1],J_0}\circ \nu\circ\sigma\circ A_{J_0',[0,1]}(x) & \ x\in J_0'
\end{array}\right. \ .
\end{equation}
\begin{proof}[Proof of Theorem~\ref{mainthm:Zygmund}]
Observe that $f|_{I_n'}$ are differentiable with uniformly bounded derivative over all $n$. 
In fact, for all $n\in\mathbb{N}$,
\begin{equation}
\|f'\|_{C^0(I_n',\mathbb{R})}
\leq 
\pi \ .
\end{equation}
Also the $f|_{J_n'}$ are differentiable and, by inequality~\eqref{property:J-difference-quotients} above, 
the derivatives are also uniformly bounded over all $n\neq 0$ since, for all integers $n\geq 1$,
\begin{equation}
\|f'\|_{C^0(J_n',\mathbb{R})}
\;=\;
\frac{|I_n\cup J_n|}{|J_n'|}\|\sigma'\|_{C^0([0,1],\mathbb{R})}
\;\leq\; 
K\pi/2 \ .
\end{equation}
Since $f|J_0$ is also differentiable, and on all intervals $I_n'$ and $J_n'$, $f'$ vanishes at the endpoints, 
it follows that $f|_{(0,\frac{1}{2}]}$ is also differentiable.
Hence $f|_{[0,\frac{1}{2}]}$ is little Zygmund.
Next, as $f|_{[\frac{1}{2},1]}$ is little Zygmund and $f$ is differentiable from the left- and from the right at $x=\frac{1}{2}$ (with zero derivative),
it follows that $f$ is little Zygmund on $[0,1]$.

Let us now show that $f$ has infinite topological entropy.
Since $f(I_n)=I_n'$ and $f(I_n')=I_n$ it follows that $I_n'$ is $f^2$-invariant, for each $n$.
Moreover $f|_{I_n}$ is a homeomorphism from $I_n$ to $I_n'$, and $f|_{I_n'}$ is an affine rescaling of $g_{n}$. 
Thus by the Key Property stated above, $f^2|_{I_n}$ is a $2n$-branched horseshoe. 
Hence $f^2|_{I_n}$ has topological entropy at least $\log (2n)$.
Consequently
\begin{equation}
h_\mathrm{top}(f^2)
\;\geq\; 
\sup_{n}h_\mathrm{top}\left(f^2|_{I_n'}\right)
\;=\;
\sup_n \log (2n)
\;=\;
\infty \ .
\end{equation}
Finally, since it is known that for an arbitrary continuous self-map $F$ of a 
compact metric space the equality $h_\mathrm{top}(F^k)=k h_\mathrm{top}(F)$ holds
for any positive integer $k$, it follows that $h_\mathrm{top}(f)=+\infty$ as well.
\end{proof}

\section{Concluding Remarks.}\label{sect:conclusion}
We finish with a number of open problems suggested by this work.
\begin{enumerate}
\item
Do there exist one-dimensional endomorphisms with infinite topological entropy which are asymptotically $h$-expansive, or even $h$-expansive?
What is their `optimal' regularity: can they be H\"older or Sobolev?
\item
The Zygmund example has the property that its second iterate is no longer Zygmund. 
(It is of the type given in Theorem~\ref{mainthm:HolderSobolev} which, 
as remarked on in Section~\ref{sect:Zygmund} cannot lie in the Zygmund class.)
Does there exist a map in the Zygmund class (big or little) with infinite topological entropy, 
and such that all iterates are also in the Zygmund (big or little) class?
\item
(Alby Fisher) 
Is there an ergodic example of a map with infinite entropy in dimension-one? 
With some H\"older, Sobolev or Zygmund class?
\end{enumerate}

\subsection*{Acknowledgements}
The author would like to thank IME-USP, ICERM (Brown University), 
Uppsala University and KTH Stockholm for their hospitality.
I would also like to thank E.\ de Faria, C.\ Tresser and M.\ Benedicks for many stimulating conversations 
regarding the relation between regularity and entropy.
Finally, I also thank 
M.\ Benedicks for the question that lead to this work.

\end{document}